\author{Nick Dewaele\thanks{KU Leuven, Department of Computer Science, Celestijnenlaan 200A, B-3001 Leuven, Belgium. \email{nick.dewaele@kuleuven.be}} \and Paul Breiding\thanks{MPI MiS, Inselstr.\ 22, 04103 Leipzig, Germany. \email{paul.breiding@mis.mpg.de}. P.\ B.\ is funded by the Deutsche Forschungsgemeinschaft (DFG) -- Projektnummer 445466444.} \and Nick Vannieuwenhoven\thanks{KU Leuven, Department of Computer Science, Celestijnenlaan 200A, B-3001 Leuven, Belgium. \email{nick.vannieuwenhoven@kuleuven.be}. N.\ V.\ was supported by a Postdoctoral Fellowship of the Research Foundation---Flanders (FWO) with project 12E8119N.}}
\title{The condition number of many tensor decompositions is invariant under Tucker compression}
\date{}
\DeclareMathOperator{\Span}{span}
\newtheorem{remark}[theorem]{Remark}
\crefname{equation}{}{}
\crefname{equation}{}{}
\crefname{figure}{Figure}{Figures}
\crefname{section}{Section}{Sections}
\crefname{table}{Table}{Tables}
\crefname{lemma}{Lemma}{Lemmata}
\crefname{appendix}{Appendix}{Appendix}
\crefname{prop}{Proposition}{Propositions}
\crefname{thm}{Theorem}{Theorems}
\crefname{cor}{Corollary}{Corollaries}
\crefname{dfn}{Definition}{Definitions}
\crefname{hyp}{Hypothesis}{Hypotheses}
\crefname{notation}{Notations}{Notations}
\crefname{rem}{Remark}{Remarks}
\crefname{claim}{Claim}{claims}
\crefname{assumption}{Assumption}{Assumptions}
\newcommand{\R}{\mathbb{R}}
\DeclareMathAlphabet{\mathpzc}{OT1}{pzc}{m}{it}
\newcommand{\norm}[1]{\left\|#1\right\|}
\newcommand{\vb}[1]{\mathbf{#1}}
\begin{document}
\maketitle

\begin{abstract}
We characterise the sensitivity of several additive tensor decompositions with respect to perturbations of the original tensor.
These decompositions include canonical polyadic decompositions, block term decompositions, and sums of tree tensor networks. 
Our main result shows that the condition number of all these decompositions is invariant under Tucker compression. This result can dramatically speed up the computation of the condition number in practical applications. We give the example of an $265\times 371\times 7$ tensor of rank $3$ from a food science application whose condition number was computed in $6.9$ milliseconds by exploiting our new theorem, representing a speedup of four orders of magnitude over the previous state of the art.
\end{abstract}

\begin{keywords}
structured block term decomposition, 
sum of tree tensor networks, 
condition number, 
Tucker compression, 
invariance
\end{keywords}
\begin{AMS}
49Q12, 53B20, 15A69, 65F35
\end{AMS}

\section{Introduction}

A tensor of order $D$ is a $D$-array of size $n_1 \times \dots \times n_D$. In numerous applications (see, e.g., \cite{Kolda2009,Sidiropoulos2017} and the references therein), one seeks a decomposition that expresses a tensor $\mathpzc{A}$ as a sum of $R$ elementary terms:
\begin{equation}
    \label{eq:decomposition}
\mathpzc{A} = \mathpzc{A}_1 + \dots + \mathpzc{A}_R,
\end{equation}
where $\mathpzc{A}_r\in\mathcal M_r$ and $\mathcal M_r$ is a low-dimensional \textit{manifold} in the space of tensors. Such a decomposition was called a \emph{join decomposition} in \cite{Breiding2018a}.

In this paper, we study the sensitivity properties of a certain subclass of join decompositions related to tensors. We call them \textit{structured block term decompositions} (SBTD). The formal definition of this class is given in \cref{sec:sbtd} below. Informally, an SBTD involves manifolds $\mathcal M_r$ that are defined by imposing certain (manifold) structures on the core tensor of a Tucker decomposition with fixed multilinear rank $(l_1,\dots,l_D)$.
Many commonly used decompositions are SBTDs; for instance,
\begin{itemize}
    \item[$\circ$] sums of rank-$1$ tensors, i.e., canonical polyadic decomposition (CPD) \cite{Hitchcock1927},
    \item[$\circ$] sums of Tucker decompositions, i.e., block term decomposition (BTD) \cite{DeLathauwer2008},
    \item[$\circ$] sums of tensor train decompositions \cite{Oseledets2011, Ehrlacher2021}, and
    \item[$\circ$] sums of hierarchical Tucker decompositions \cite{Hackbusch2009,Grasedyck2010}.
\end{itemize}
The \textit{condition number} is one way to measure the sensitivity of a tensor decomposition relative to perturbations of the tensor. For general join decompositions this number was analysed in \cite{Breiding2018a}.
One main result we establish in this paper is that the condition number of an SBTD is invariant under \textit{Tucker compression}.

Recall that a \textit{Tucker decomposition} \cite{Tucker1966} represents $\mathpzc{A}$ in a tensor product subspace by expressing
\[
\mathpzc{A} = (Q_1, \dots, Q_D) \cdot \mathpzc{G}
:= \sum_{i_1=1}^{m_1} \dots \sum_{i_D=1}^{m_D} g_{i_1,\dots,i_D} \vb{q}_{i_1}^{1} \otimes \dots \otimes \vb{q}_{i_D}^D,
\]
where $Q_d = [ \vb{q}_i^d ]_i \in \mathbb{R}^{n_d \times m_d}_\star$ with $n_d \ge m_d$ have linearly independent columns for each $d = 1,\dots,D$. 

The \textit{core tensor} $\mathpzc{G}$ is often much smaller than $\mathpzc{A}$, and it gives the coordinates of~$\mathpzc{A}$ with respect to the tensor product basis $Q_1 \otimes \dots \otimes Q_D$. Note that we will switch freely between two equivalent notations for Tucker decomposition: the first, $(Q_1, \dots, Q_D)\cdot\mathpzc{G}$, is a common notation \cite{DeSilva2008} for \textit{multilinear multiplication}, while the second, $(Q_1 \otimes \dots \otimes Q_D)\mathpzc{G}$ emphasises that a Tucker decomposition consists of taking a particular linear combination of the tensors in a tensor product basis $Q_1 \otimes \dots \otimes Q_D$. Herein, $Q_1 \otimes \dots \otimes Q_D$ denotes the tensor product of matrices, which acts linearly on rank-$1$ tensors by $(Q_1, \dots, Q_D) \cdot (v_1\otimes \cdots\otimes v_D) := (Q_1v_1\otimes \cdots\otimes Q_Dv_D)$.
In coordinates, this matrix is given by $Q_1 \otimes \dots \otimes Q_D = [ \vb{q}_{i_1}^1 \otimes \dots \otimes \vb{q}_{i_D}^D ]_{i_1,\dots,i_D}$; see \cite{Greub1978}.

Originally proposed for CPD, Tucker compression \cite{Bro1998} consists of expressing a tensor $\mathpzc{A} \in \mathbb{R}^{n_1 \times \dots \times n_D}$ in coordinates in the smallest tensor product subspace in which it lives, in order to speed up the computation of decompositions of the form \cref{eq:decomposition}. That is, before computing the CPD, one first computes a Tucker decomposition, expressing $\mathpzc{A}=(Q_1,\dots,Q_D)\cdot\mathpzc{G}$. Then, one computes the CPD of the core tensor $\mathpzc{G}$. The obtained decomposition can then be extended to a decomposition of the original tensor $\mathpzc{A}$ by multilinear multiplication with the basis $(Q_1, \dots, Q_D)$.
Since there are efficient algorithms \cite{DeLathauwer2000,Vannieuwenhoven2012} for computing an approximate Tucker decomposition of~$\mathpzc{A}$, contrary to the mostly optimization-based algorithms for computing CPDs, this compress--decompose--decompress approach can often reduce the overall computation time \cite{Bro1998}.
Another main result of this paper characterises which manifolds~$\mathcal{M}_r$ in the join decomposition \cref{eq:decomposition} are compatible with Tucker compression.

The topic of this article is to characterise how a decomposition of the form \cref{eq:decomposition} changes if $\mathpzc{A}$ is corrupted by noise. In order to obtain a robust interpretation the elementary terms, it is essential to quantify how sensitive they are to the perturbations. As explained in \cite{Breiding2018a}, under certain mild conditions,
$\mathpzc{A}$ has an isolated decomposition  $a = (\mathpzc{A}_1,\dots,\mathpzc{A}_R)$ and we can find a local inverse function $\Sigma^{-1}_a$ of the \emph{addition map}
$\Sigma: \mathcal M_1\times \dots\times \mathcal M_R\to \mathbb R^{n_1\times \dots\times n_D},\;(\mathpzc{A}_1, \dots, \mathpzc{A}_R) \mapsto \mathpzc{A}_1+\dots+\mathpzc{A}_R$. The sensitivity of the elementary terms $\mathpzc{A}_r$ can be measured by the condition number \cite{Rice1966}
\begin{equation}
    \label{eq:defcond}
\kappa^{\mathrm{SBTD}}(\mathpzc{A}_1,\dots,\mathpzc{A}_R) :=
\lim_{\delta \rightarrow 0}\,
\sup_{\substack{\widetilde{\mathpzc{A}} \in \mathcal{I}:\norm{\mathpzc{A} - \widetilde{\mathpzc{A}}} \le \delta}} \,\frac{\norm{\Sigma^{-1}_a(\mathpzc{A}) - \Sigma^{-1}_a (\widetilde{\mathpzc{A}})}}{\norm{\mathpzc{A} - \widetilde{\mathpzc{A}}}},
\end{equation}
where $\mathcal{I}$ is the set of valid perturbations (more on this below), and $\norm{\cdot}$ denotes both the Euclidean norm on the ambient space $\R^{n_1\times\dots\times n_D}$ and the product Euclidean norm on $\R^{n_1\times\dots\times n_D} \times \dots \times \R^{n_1\times\dots\times n_D}$. The condition number measures $a=(\mathpzc{A}_1,\dots,\mathpzc{A}_R)$ in one piece as tensors. It does not measure how the points are \textit{parametrised}, which would introduce a number of complications.\footnote{See \cite{V2017} for how to deal with such complications in the context of the CPD.}
Furthermore, a priori, the condition number depends on both input $\mathpzc{A}$ and output $(\mathpzc{A}_1,\dots,\mathpzc{A}_R)$ because it is defined in terms of a \emph{local} inverse \cite{Burgisser2013}. However, since $\mathpzc{A}$ depends uniquely on $(\mathpzc{A}_1,\dots,\mathpzc{A}_R)$ we can write the condition number as a function of the output only.
We have 
\begin{equation}
    \label{eq:fstOrderErrBound}
\norm{\Sigma^{-1}_a(\mathpzc{A}) - \Sigma^{-1}_a(\widetilde{\mathpzc{A}})}
\le \kappa^{\mathrm{SBTD}}(\mathpzc{A}_1,\dots,\mathpzc{A}_R)
\norm{\mathpzc{A} - \widetilde{\mathpzc{A}}} + o\left(\norm{\mathpzc{A} - \widetilde{\mathpzc{A}}}\right)
\end{equation}
as an asymptotically sharp first-order error bound.
\Cref{eq:defcond} requires specifying the domain $\mathcal{I}$, which means fixing the space in which the perturbations $\widetilde{\mathpzc{A}}$ are allowed to live. There are four increasingly restrictive ways of looking at the problem:
\begin{enumerate}
    \item $\widetilde{\mathpzc{A}}\in \mathbb R^{n_1\times\dots\times n_D}$ is arbitrary and the SBTD of $\widetilde{\mathpzc{A}}$ is interpreted as the least-square minimiser $\mathrm{argmin}_{(\mathpzc{A}_1,\dots,\mathpzc{A}_R)\in\mathcal M_1\times \dots\times \mathcal M_R} \tfrac{1}{2}\norm{\widetilde{\mathpzc{A}} - (\mathpzc{A}_1+\dots+\mathpzc{A}_R)}^2$.
    \item $\widetilde{\mathpzc{A}}$ has an SBTD.
    \item $\widetilde{\mathpzc{A}}$ can be Tucker compressed to a core $\widetilde{\mathpzc{G}} \in \mathbb{R}^{m_1 \times \dots \times m_D}$ and $\widetilde{\mathpzc{G}}$ has an SBTD.
    \item $\widetilde{\mathpzc{A}}$ lives in the same tensor subspace as $\mathpzc{A}$, i.e., we have ${\mathpzc{A}} = (Q_1, \dots, Q_D) \cdot {\mathpzc{G}}$ and $\widetilde{\mathpzc{A}} = (Q_1, \dots, Q_D) \cdot \widetilde{\mathpzc{G}}$, and the cores ${\mathpzc{G}}$ and $\widetilde{\mathpzc{G}}$ both have an SBTD.
\end{enumerate}
A priori, one should expect the problem to become easier in the more restrictive cases in the sense that the condition number decreases. Indeed, the set of allowed perturbations gets strictly smaller. However, we prove the following surprising result.

\begin{theorem}
    \label{thm:informalSBTDcondInvariance}
Let $\mathpzc{A} = \mathpzc{A}_1 + \dots + \mathpzc{A}_R$ be an SBTD. The condition number $\kappa^{\mathrm{SBTD}}(\mathpzc{A}_1,\dots,\mathpzc{A}_R)$ is the same for all four domains outlined above.
\end{theorem}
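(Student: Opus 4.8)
The plan is to reduce all four condition numbers to the smallest singular value of a single matrix built from tangent spaces, and then to show that projecting those tangent spaces onto the Tucker subspace changes nothing. Write $\kappa_i$ for the condition number $\kappa^{\mathrm{SBTD}}(\mathpzc{A}_1,\dots,\mathpzc{A}_R)$ computed with perturbation domain $\mathcal{I}_i$ ($i=1,\dots,4$), and let $\Sigma_a'$ denote the derivative of $\Sigma$ at $a=(\mathpzc{A}_1,\dots,\mathpzc{A}_R)$. By the join-decomposition theory of \cite{Breiding2018a} (see also the discussion preceding the theorem), when $a$ is an isolated decomposition and $U=[\,U_1\mid\dots\mid U_R\,]$ has blocks $U_r$ forming orthonormal bases of $T_{\mathpzc{A}_r}\mathcal{M}_r$, one has $\kappa_2=\sigma_{\min}(U)^{-1}$, where $\sigma_{\min}$ is the smallest singular value and $\sigma_{\min}(\Sigma_a')=\sigma_{\min}(U)$; the same value equals $\kappa_1$, because the derivative at $\mathpzc{A}$ of the least-squares inverse is the Moore--Penrose pseudoinverse $(\Sigma_a')^{\dagger}$, which vanishes on $(\operatorname{im}\Sigma_a')^{\perp}$ and hence has spectral norm $\sigma_{\min}(\Sigma_a')^{-1}$. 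For domain $4$ we may take the $Q_d$ with orthonormal columns — harmless, since $\mathcal{I}_4$ depends only on $\mathcal{T}:=\operatorname{im}(Q_1\otimes\dots\otimes Q_D)$ — so that $Q_1\otimes\dots\otimes Q_D$ is an isometry onto $\mathcal{T}$; a short computation then shows that $\kappa_4$ equals the condition number of the induced SBTD of the core $\mathpzc{G}$, i.e.\ $\kappa_4=\sigma_{\min}(U^Q)^{-1}$, where $U^Q=[\,U^Q_1\mid\dots\mid U^Q_R\,]$ collects orthonormal bases of $T_{\mathpzc{A}_r}\mathcal{M}^Q_r$ and $\mathcal{M}^Q_r\subseteq\mathcal{M}_r$ is the submanifold of elements all of whose Tucker factors have their columns in $\operatorname{col}(Q_d)$ (the image of the core manifold under the isometry). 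Note $\mathpzc{A}_r\in\mathcal{M}^Q_r$, since in an SBTD every Tucker factor of every $\mathpzc{A}_r$ has its columns inside $\operatorname{col}(Q_d)$, equivalently $\operatorname{col}(\mathpzc{A}_{r,(d)})\subseteq\operatorname{col}(Q_d)$ where $\mathpzc{A}_{r,(d)}$ is the $d$-th flattening; this is part of the structure of an SBTD established in \cref{sec:sbtd}. Finally, the domains are nested, $\mathcal{I}_4\subseteq\mathcal{I}_3\subseteq\mathcal{I}_2\subseteq\mathcal{I}_1$, and the SBTD/least-squares interpretations agree on the smaller domains, so $\kappa_1\ge\kappa_2\ge\kappa_3\ge\kappa_4$; since moreover $T_{\mathpzc{A}_r}\mathcal{M}^Q_r\subseteq T_{\mathpzc{A}_r}\mathcal{M}_r$ forces $\sigma_{\min}(U)\le\sigma_{\min}(U^Q)$, the whole theorem reduces to the single inequality $\sigma_{\min}(U)\ge\sigma_{\min}(U^Q)$.

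To get it, I would first split the tangent spaces along $\mathcal{T}$. Let $P_{\mathcal{T}}=Q_1Q_1^{\mathsf T}\otimes\dots\otimes Q_DQ_D^{\mathsf T}$ be the orthogonal projector onto $\mathcal{T}$. Using $\operatorname{col}(\mathpzc{A}_{r,(d)})\subseteq\operatorname{col}(Q_d)$, a direct computation with the parametrisation of $\mathcal{M}_r$ shows each $T_{\mathpzc{A}_r}\mathcal{M}_r$ is $P_{\mathcal{T}}$-invariant, so it splits orthogonally as $T_{\mathpzc{A}_r}\mathcal{M}_r=\bigl(T_{\mathpzc{A}_r}\mathcal{M}_r\cap\mathcal{T}\bigr)\oplus\bigl(T_{\mathpzc{A}_r}\mathcal{M}_r\cap\mathcal{T}^{\perp}\bigr)$, and one checks $T_{\mathpzc{A}_r}\mathcal{M}_r\cap\mathcal{T}=T_{\mathpzc{A}_r}\mathcal{M}^Q_r$. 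Moreover $\mathcal{T}^{\perp}$ is graded by the set $S\subseteq\{1,\dots,D\}$ of modes $d$ in which a tensor is orthogonal to $\operatorname{col}(Q_d)$, and since $T_{\mathpzc{A}_r}\mathcal{M}_r$ consists of first-order variations, its $\mathcal{T}^{\perp}$-part lands in the single-mode pieces: $T_{\mathpzc{A}_r}\mathcal{M}_r\cap\mathcal{T}^{\perp}=\bigoplus_{d=1}^{D}N^{(d)}_r$ with $N^{(d)}_r=\operatorname{col}(Q_d)^{\perp}\otimes R^{(d)}_r$, where $R^{(d)}_r:=\operatorname{rowsp}(\mathpzc{A}_{r,(d)})\subseteq\bigotimes_{e\ne d}\operatorname{col}(Q_e)$ is the mode-$d$ row space of $\mathpzc{A}_r$. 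Choosing the orthonormal bases compatibly and regrouping the columns of $U$ gives $U=[\,U^Q\mid U^{(1)}\mid\dots\mid U^{(D)}\,]$, where $U^{(d)}=[\,U^{(d)}_1\mid\dots\mid U^{(d)}_R\,]$ and the $D+1$ blocks $U^Q,U^{(1)},\dots,U^{(D)}$ have their columns in the mutually orthogonal subspaces $\mathcal{T}$ and the single-mode graded pieces of $\mathcal{T}^{\perp}$ (one per mode). Hence $U^{\mathsf T}U$ is block diagonal and $\sigma_{\min}(U)^2=\min\bigl(\sigma_{\min}(U^Q)^2,\sigma_{\min}(U^{(1)})^2,\dots,\sigma_{\min}(U^{(D)})^2\bigr)$, so it remains to prove $\sigma_{\min}(U^{(d)})\ge\sigma_{\min}(U^Q)$ for each $d$.

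This is the heart of the matter, and it holds because the compressed tangent space is large enough to reproduce the geometry of each block $U^{(d)}$. Let $\mathcal{G}^{(d)}$ be the Gram matrix of a juxtaposition of orthonormal bases of $R^{(d)}_1,\dots,R^{(d)}_R$. On the one hand, since $N^{(d)}_r=\operatorname{col}(Q_d)^{\perp}\otimes R^{(d)}_r$ uses the same tensor factor $\operatorname{col}(Q_d)^{\perp}$ for every $r$, the Gram matrix of $U^{(d)}$ equals $\mathcal{G}^{(d)}\otimes I_{n_d-m_d}$, so $\sigma_{\min}(U^{(d)})^2=\lambda_{\min}(\mathcal{G}^{(d)})$. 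On the other hand, fixing any unit vector $\vb{u}\in\operatorname{col}(Q_d)$, one checks that $\vb{u}\otimes R^{(d)}_r\subseteq T_{\mathpzc{A}_r}\mathcal{M}^Q_r$: these are the tangent directions obtained by perturbing the mode-$d$ Tucker factor of $\mathpzc{A}_r$ by a rank-one matrix with column in $\operatorname{col}(Q_d)$. Restricting the minimisation $\sigma_{\min}(U^Q)^2=\min\bigl\{\norm{\textstyle\sum_r\dot{\mathpzc{A}}_r}^2:\dot{\mathpzc{A}}_r\in T_{\mathpzc{A}_r}\mathcal{M}^Q_r,\ \textstyle\sum_r\norm{\dot{\mathpzc{A}}_r}^2=1\bigr\}$ to $\dot{\mathpzc{A}}_r\in\vb{u}\otimes R^{(d)}_r$ yields exactly $\min\{\norm{\sum_r\vb{\rho}_r}^2:\vb{\rho}_r\in R^{(d)}_r,\ \sum_r\norm{\vb{\rho}_r}^2=1\}=\lambda_{\min}(\mathcal{G}^{(d)})$, whence $\sigma_{\min}(U^Q)^2\le\lambda_{\min}(\mathcal{G}^{(d)})=\sigma_{\min}(U^{(d)})^2$. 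Combined with the previous display this gives $\sigma_{\min}(U)^2=\sigma_{\min}(U^Q)^2$, so $\kappa_1=\kappa_2=\kappa_4$, and the a-priori monotonicity then forces $\kappa_3$ into the same value.

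The main obstacle I anticipate is making the tangent-space computations of the middle paragraphs genuinely uniform over all the decomposition classes subsumed by the notion of SBTD — CPD, block term decomposition, sums of tensor trains, sums of hierarchical Tucker decompositions. One needs a single description of $T_{\mathpzc{A}_r}\mathcal{M}_r$, of its behaviour under $P_{\mathcal{T}}$, and in particular the two facts that $\operatorname{col}(\mathpzc{A}_{r,(d)})\subseteq\operatorname{col}(Q_d)$ and that the mode-$d$ row space $R^{(d)}_r$ appearing in the orthogonal-complement part $N^{(d)}_r$ coincides with the one already sitting inside $T_{\mathpzc{A}_r}\mathcal{M}^Q_r$. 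These are precisely the structural properties of SBTDs that \cref{sec:sbtd} supplies; granting them, the remainder is the elementary block-matrix and Rayleigh-quotient bookkeeping sketched above.
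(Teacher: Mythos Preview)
Your proposal is correct and follows the same skeleton as the paper's own proof: reduce everything to $\kappa_2=\kappa_4$, split the big Terracini matrix into $D+1$ mutually orthogonal blocks $U^Q,U^{(1)},\dots,U^{(D)}$, and then show $\sigma_{\min}(U^{(d)})\ge\sigma_{\min}(U^Q)$ for each mode $d$. Where the paper executes that last step via explicit HOSVD-based orthonormal bases (\cref{prop:GTuckerOrthBasis}), a Kronecker singular-value identity (\cref{lemma:KroneckerSvdIdentity}) to strip off the differing orthogonal factors $[U_d^r\;U_d^{r\perp}]$, and interlacing of singular values, your Gram-matrix/Rayleigh-quotient argument---computing the Gram of $U^{(d)}$ as $\mathcal{G}^{(d)}\otimes I_{n_d-m_d}$ and then embedding $\vb{u}\otimes R^{(d)}_r$ back into $T_{\mathpzc{A}_r}\mathcal{M}^Q_r$ via the $\mathrm{GL}$-invariance of the core structure---is a more basis-free repackaging of exactly the same mechanism. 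Both arguments hinge on the same structural fact, namely that the $\mathrm{GL}$-orbit directions of the core already sit inside the compressed tangent space, so the ``extra'' per-mode blocks contribute no new small singular values.

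One small caveat: the inclusion $\operatorname{col}(\mathpzc{A}_{r,(d)})\subseteq\operatorname{col}(Q_d)$ that you invoke is not a consequence of the general SBTD definition in \cref{sec:sbtd} (summands can cancel partially in some mode), but rather the defining property of a \emph{subspace-constrained} SBTD as introduced in \cref{sec:basicDefs}. Since this is precisely what domain~4 presupposes and what the formal \cref{thm:SBTDcondInvariance} explicitly assumes ($\mathpzc{A}_r=(Q_1,\dots,Q_D)\cdot\mathpzc{G}_r$), the gap is only terminological.
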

This theorem is implied by \cref{thm:SBTDcondInvariance,cor:SBTDcondInvariance} below.

\Cref{thm:informalSBTDcondInvariance} is in stark contrast to some other problems in which the condition number depends on the domain. For instance,
the condition number of the matrix logarithm for perturbations constrained in the symplectic group was studied in~\cite{Arslan2019}. It was shown that the ratio between the unconstrained and constrained condition number can become arbitrarily large.

Our result indicates that computing the SBTDs of $\mathpzc{A}$ and $\mathpzc{G}$ are equally difficult from a numerical point of view. Indeed, it is known that the convergence rate of iterative methods to compute the decomposition is related to the condition number \cite{Nocedal2006, Breiding2018b,Absil2008}.
This suggests that compression, surprisingly, will not improve the local rate of convergence, even though the search space can be much smaller. Compression can, nevertheless, reduce the overall computation time when $\mathpzc{A}$ is highly compressible \cite{Bro1998}.

A major practical advantage of \cref{thm:informalSBTDcondInvariance} is that the condition number can be computed more efficiently by considering $\mathpzc{A}$ as a point in a tensor product subspace: It suffices to compute the condition number of the core $\mathpzc{G}$. An example illustrates the above significant computational advantage. Consider a rank-3 tensor of dimensions $265 \times 371 \times 7$, as in the sugar data set of \cite{Bro1998}.
Its CPD can be compressed to that of a  $3 \times 3 \times 3$-tensor.
We implemented two algorithms to compute the condition number in Julia v1.6.\ \cite{BEKV}: the one from \cite{Breiding2018a} and one based on \cref{thm:informalSBTDcondInvariance}. On a system with an Intel Xeon CPU E5-2697 v3 running on $8$ cores and $126$GB memory, this took 110 seconds and 6.9 milliseconds, respectively, which gives a speedup of over $15\,000$. If the CPD is already in compressed form, the time can be reduced further to only $0.089$ milliseconds, representing a speedup of more than a million over the state of the art.

\subsection{Outline}
We introduce the SBTD in \cref{sec:sbtd}. In \cref{sec:geometry}, we derive the geometric foundations of structured Tucker decompositions upon which the theory of the condition number is based. An algorithm to compute the condition number is outlined in \cref{sec:computeCond}. We also present qualitative properties of well or ill-conditioned SBTDs.
\Cref{sec:basicDefs} introduces subspace-constrained SBTDs and proves the main result, \cref{thm:informalSBTDcondInvariance}, which states that the condition number of SBTDs is invariant under Tucker compression.
Concluding numerical experiments are found in \cref{sec:experiments}.

\subsection{Notation}
The only norm used in this paper are the Euclidean (or Frobenius) norms for tensors and vectors, all consistently denoted by $\|\cdot\|$.
The manifold of real $n \times m$ matrices of rank $m$ is denoted as $\mathbb{R}^{n \times m}_\star$, where $n \ge m$. The $n$-dimensional sphere is $\mathbb{S}^n$. The $j$th column of the identity $\mathds{1}_n$ is $\vb{e}^{(n)}_j$. If the ambient dimension is clear from the context, we also abbreviate $\vb{e}_j:=\vb{e}^{(n)}_j$. The $d$th unfolding of a tensor $\mathpzc{A}$ is $\mathpzc{A}_{(d)}$.
For any matrix $X$ and any set of matrices $A_n$, $n = 1,\dots, N$, and any $k = 1,\dots, N+1$, we denote
$
X \otimes_k (A_1 \otimes \dots \otimes A_N) := A_1 \otimes \dots \otimes A_{k-1} \otimes X \otimes A_{k} \otimes \dots \otimes A_N
.$
For a group $G$ acting on a set $\mathcal{M}$, the projection of $x \in \mathcal{M}$ onto its $G$-orbit is $[x]_{G}$.

\section{The structured block term decomposition}
\label{sec:sbtd}
In this section, we give a formal definition of the SBTD, the main tensor decomposition that we study in this paper. Just as a BTD is a linear combination of Tucker decompositions, an SBTD is a linear combination of \emph{structured Tucker decompositions}. The structure we consider is imposed on the core tensor of the Tucker decomposition.

\begin{definition}[Tucker core structure] \label{def_Tuckercore}
A smooth submanifold $\mathcal{M} \subseteq \R^{l_1 \times \dots \times l_D}$ is a \emph{Tucker core structure} if it is a $(\mathrm{GL}(l_1)\times\dots\times\mathrm{GL}(l_D))$-homogeneous manifold:
\begin{enumerate}
    \item $\mathpzc{C} \in \mathcal{M}$ has multilinear rank equal to $(l_1,\dots,l_D)$, and
    \item $\left( A_1, \dots, A_D \right) \cdot \mathpzc{C} \in \mathcal{M}$ for all $A_d \in \mathrm{GL}(l_d)$ with $d=1,\dots,D$.
\end{enumerate}
\end{definition}

Next, we can define the $\mathcal{M}$-structured Tucker decomposition.

\begin{definition}[Structured Tucker decomposition] \label{def:GTucker}
Let $\mathcal{M} \subseteq \R^{l_1 \times \dots \times l_D}$ be a Tucker core structure.
An \emph{$\mathcal{M}$-struct\-ur\-ed Tucker decomposition} of $\mathpzc{A} \in \R^{n_1 \times \dots \times n_D}$ is an expression of the form
\begin{equation*}
\mathpzc{A} = (U_1, \dots, U_D) \cdot \mathpzc{C} = (U_1 \otimes \dots \otimes U_D) \mathpzc{C}
\end{equation*}
with $\mathpzc{C} \in \mathcal{M}$ and all $U_d \in \mathbb{R}^{n_d \times l_d}_\star$ for $d=1,\dots,D$.
\end{definition}

The first basic result we establish in \cref{sec:geometry} below ensures that the results from \cite{Breiding2018a} can be applied to study the condition number.

\begin{proposition}\label{prop_itsamanifold}
The set of all tensors $\mathpzc{A}$ admitting an $\mathcal{M}$-structured Tucker decomposition forms a smooth embedded submanifold $\mathcal{M}^{n_1,\dots,n_D} \subseteq \mathbb{R}^{n_1 \times \dots \times n_D}$, called the $\mathcal{M}$-structured Tucker manifold.
\end{proposition}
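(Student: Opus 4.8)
The plan is to present $\mathcal{M}^{n_1,\dots,n_D}$ as the image of a canonical smooth parametrisation, divide out the redundancy of that parametrisation by a free and proper Lie group action, and show that the resulting bijection onto $\mathcal{M}^{n_1,\dots,n_D}$ is a smooth embedding. I would set $\mathcal{N} := \mathbb{R}^{n_1 \times l_1}_\star \times \dots \times \mathbb{R}^{n_D \times l_D}_\star \times \mathcal{M}$, which is a smooth manifold because each $\mathbb{R}^{n_d \times l_d}_\star$ is open in $\mathbb{R}^{n_d \times l_d}$ and $\mathcal{M}$ is a submanifold by hypothesis, and consider the parametrisation $\Phi \colon \mathcal{N} \to \mathbb{R}^{n_1 \times \dots \times n_D}$, $(U_1,\dots,U_D,\mathpzc{C}) \mapsto (U_1,\dots,U_D)\cdot\mathpzc{C}$; this is smooth and, by \cref{def:GTucker}, has image exactly $\mathcal{M}^{n_1,\dots,n_D}$. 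The group $G := \mathrm{GL}(l_1) \times \dots \times \mathrm{GL}(l_D)$ acts smoothly on $\mathcal{N}$ via $(A_1,\dots,A_D)\cdot(U_1,\dots,U_D,\mathpzc{C}) := (U_1 A_1^{-1}, \dots, U_D A_D^{-1}, (A_1,\dots,A_D)\cdot\mathpzc{C})$, which is well defined precisely because of the second condition in \cref{def_Tuckercore}, and $\Phi$ is constant on $G$-orbits. (Geometrically, $\mathcal{M}^{n_1,\dots,n_D}$ will turn out to be a fibre bundle over $\Gr(l_1,n_1) \times \dots \times \Gr(l_D,n_D)$, the base recording the column spans of the unfoldings, with fibre $\mathcal{M}$; the $G$-invariance of $\mathcal{M}$ is what makes ``the copy of $\mathcal{M}$ in a fibre'' independent of the chosen bases.)

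Next I would verify that the $G$-action is free---$U_d A_d^{-1} = U_d$ forces $A_d = \mathds{1}_{l_d}$ since $U_d$ has full column rank---and proper: if $U_d^{(k)} \to U_d$ and $U_d^{(k)}(A_d^{(k)})^{-1} \to V_d$ in $\mathbb{R}^{n_d \times l_d}_\star$, then the least singular values of $U_d^{(k)}$ and of $U_d^{(k)}(A_d^{(k)})^{-1}$ are eventually bounded away from $0$, so expressing $A_d^{(k)}$ and $(A_d^{(k)})^{-1}$ through Moore--Penrose inverses shows both are bounded, yielding a subsequence of $(A_1^{(k)},\dots,A_D^{(k)})$ converging in $G$. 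By the quotient manifold theorem, $\mathcal{N}/G$ is then a smooth manifold of dimension $\sum_{d=1}^{D} l_d(n_d - l_d) + \dim\mathcal{M}$ with a smooth submersion $\pi \colon \mathcal{N} \to \mathcal{N}/G$, and $G$-invariance of $\Phi$ produces a smooth $\bar\Phi \colon \mathcal{N}/G \to \mathbb{R}^{n_1 \times \dots \times n_D}$ with $\Phi = \bar\Phi \circ \pi$ and image $\mathcal{M}^{n_1,\dots,n_D}$.

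I would then show $\bar\Phi$ is a smooth embedding. Injectivity: if $(U_1,\dots,U_D)\cdot\mathpzc{C} = (V_1,\dots,V_D)\cdot\mathpzc{D} =: \mathpzc{A}$ with $\mathpzc{C},\mathpzc{D} \in \mathcal{M}$, the first condition in \cref{def_Tuckercore} makes $\mathpzc{C}_{(d)}$ and $\mathpzc{D}_{(d)}$ have full row rank $l_d$, which together with the full column rank of the factor matrices gives $\Span(\mathpzc{A}_{(d)}) = \Span(U_d) = \Span(V_d)$; hence $V_d = U_d A_d$ for some $A_d \in \mathrm{GL}(l_d)$, and injectivity of the matrix $U_1 \otimes \dots \otimes U_D$ then yields $\mathpzc{C} = (A_1,\dots,A_D)\cdot\mathpzc{D}$, so both preimages lie in one $G$-orbit. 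Immersion: since $\pi$ is a submersion it is enough to check that $\ker \mathrm{d}\Phi_x$ equals the tangent space of the $G$-orbit through $x = (U_1,\dots,U_D,\mathpzc{C})$, where ``$\supseteq$'' is immediate from $G$-invariance. For ``$\subseteq$'', given $(\dot U_1,\dots,\dot U_D,\dot{\mathpzc{C}}) \in \ker \mathrm{d}\Phi_x$ I would subtract the orbit tangent vector with parameters $X_d := -U_d^+ \dot U_d$ to reduce to $U_d^{\top}\dot U_d = 0$ for all $d$; taking the $d$th unfolding of $\mathrm{d}\Phi_x(\dot U_1,\dots,\dot U_D,\dot{\mathpzc{C}}) = 0$ and projecting onto $\Span(U_d)^{\perp}$ leaves only $\dot U_d\,\mathpzc{C}_{(d)}\bigl(\bigotimes_{e \ne d} U_e\bigr)^{\top} = 0$, whose right factor has full row rank, so $\dot U_d = 0$ for every $d$; the residual identity $U_d\,\dot{\mathpzc{C}}_{(d)}\bigl(\bigotimes_{e \ne d} U_e\bigr)^{\top} = 0$ then forces $\dot{\mathpzc{C}} = 0$. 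This unfolding bookkeeping is the main routine computation.

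The step I expect to be the real obstacle is promoting the injective immersion $\bar\Phi$ to an embedding: one cannot appeal to closedness of the image, because $\mathcal{M}^{n_1,\dots,n_D}$ is in general not closed in $\mathbb{R}^{n_1 \times \dots \times n_D}$ (already the set of tensors of a fixed multilinear rank fails to be closed). Instead I would build an explicit continuous local inverse near an arbitrary $\mathpzc{A}_0 = (U_1^0,\dots,U_D^0)\cdot\mathpzc{C}^0$: the polynomial maps $\mathpzc{A} \mapsto U_d(\mathpzc{A}) := \mathpzc{A}_{(d)}\mathpzc{A}_{(d)}^{\top} U_d^0$ satisfy $U_d(\mathpzc{A}_0) \in \mathbb{R}^{n_d \times l_d}_\star$ with $\Span(U_d(\mathpzc{A}_0)) = \Span(U_d^0)$, hence $U_d(\mathpzc{A}) \in \mathbb{R}^{n_d \times l_d}_\star$ on some ambient neighbourhood $\mathcal{W}$ of $\mathpzc{A}_0$; for $\mathpzc{A}$ in $\mathcal{M}^{n_1,\dots,n_D} \cap \mathcal{W}$, where the injectivity argument already shows $\mathrm{rank}(\mathpzc{A}_{(d)}) = l_d$, the columns of $U_d(\mathpzc{A})$ span $\Span(\mathpzc{A}_{(d)})$, so with $\mathpzc{C}(\mathpzc{A}) := (U_1(\mathpzc{A})^+,\dots,U_D(\mathpzc{A})^+)\cdot\mathpzc{A}$ one gets $\mathpzc{A} = (U_1(\mathpzc{A}),\dots,U_D(\mathpzc{A}))\cdot\mathpzc{C}(\mathpzc{A})$ and, by the second condition in \cref{def_Tuckercore}, $\mathpzc{C}(\mathpzc{A}) \in \mathcal{M}$. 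The continuous map $\mathpzc{A} \mapsto \pi\bigl(U_1(\mathpzc{A}),\dots,U_D(\mathpzc{A}),\mathpzc{C}(\mathpzc{A})\bigr)$ on $\mathcal{M}^{n_1,\dots,n_D} \cap \mathcal{W}$ inverts $\bar\Phi$ over the open set $\bar\Phi^{-1}(\mathcal{W})$; covering $\mathcal{N}/G$ by such neighbourhoods makes $\bar\Phi$ a homeomorphism onto its image, hence a smooth embedding, so $\mathcal{M}^{n_1,\dots,n_D} = \bar\Phi(\mathcal{N}/G)$ is a smooth embedded submanifold, diffeomorphic to $\mathcal{N}/G$ and of the dimension noted above.
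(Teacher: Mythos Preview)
Your proposal is correct and follows essentially the same route as the paper: build the quotient $\mathcal{N}/G$ via the quotient manifold theorem (free and proper action, properness handled through Moore--Penrose inverses), then show the induced map $\bar\Phi$ is an injective immersion and a homeomorphism onto its image by constructing a continuous local inverse. The only cosmetic differences are that the paper verifies the immersion by exhibiting a horizontal basis whose image is linearly independent (rather than computing $\ker\mathrm{d}\Phi$ modulo the vertical space as you do), and it builds the continuous local section of the column spans via a column-selection operator $\mathpzc{A}_{(d)}P_d$ instead of your Gram-matrix map $\mathpzc{A}_{(d)}\mathpzc{A}_{(d)}^{\top}U_d^0$; both variants work for the same reasons.
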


An important subclass of structured Tucker manifolds in applications are defined by \textit{tensor networks} in which the graph is a tree \cite{Orus2014}. This includes tensors with a fixed rank Tucker decomposition \cite{Tucker1966}, fixed-rank tensor train decomposition \cite{Oseledets2011}, and fixed rank hierarchical Tucker decomposition \cite{Hackbusch2009, Grasedyck2010}.

The set of tree tensor networks (i.e., hierarchical Tucker formats) with fixed ranks is known to form a manifold \cite{Uschmajew2013a}. This manifold is invariant under the natural action of $\mathrm{GL}(l_1)\times\dots\times\mathrm{GL}(l_D)$. Since multilinear rank is also invariant under this action \cite{Landsberg2012}, all concise (i.e., multilinear rank equals the dimension of the ambient space) tree-based tensor networks are valid Tucker core structures. This includes all aforementioned Tucker, tensor trains, and hierarchical Tucker decompositions in $\R^{l_1 \times\dots\times l_D}$ of multilinear rank $(l_1,\dots,l_D)$. In particular, $\mathbb{R}_\star$ is a valid Tucker core structure.

We will be interested in additive decompositions whose elementary terms lie in structured Tucker manifolds, called \textit{structured block term decompositions (SBTDs)}.
\begin{definition}[Structured block term decomposition]
    An SBTD of the tensor $\mathpzc{A} \in \mathbb{R}^{n_1 \times \dots \times n_D}$ associated with the $\mathcal{M}_r$-structured Tucker manifolds $\mathcal{M}_r^{n_1,\dots,n_D}$ is a decomposition of the form
    $
        \mathpzc{A} = \mathpzc{A}_1 + \dots + \mathpzc{A}_R
    $
    with $\mathpzc{A}_r \in \mathcal{M}_r^{n_1,\dots,n_D}$ for $r=1,\dots,R$.
\end{definition}

Any sum mixing rank-$1$ tensors, Tucker decompositions, tensor trains decompositions, and hierarchical Tucker decompositions is thus an SBTD.

\section{The geometry of the structured Tucker manifold}
\label{sec:geometry}

The condition number of join decompositions from \cite{Breiding2018a} requires that the summands in \cref{eq:decomposition} live on manifolds. Therefore, we first derive the geometric properties of the manifolds involved in the decomposition. We prove \cref{prop_itsamanifold}, which shows that the $\mathcal{M}^{n_1,\dots,n_D}$ in \cref{def:GTucker} are indeed manifolds. We also derive an expression for its tangent space.
The proofs of these statements are standard computations in differential geometry, similar to those of \cite{Uschmajew2013a}.

The following result establishes the differential structure of our manifolds.

\begin{proposition}
\label{prop:quotientMfd}
Let $\mathcal{M}$ be a Tucker core structure as in \cref{def_Tuckercore}. Define the manifolds
\[
\widetilde{\mathcal{M}} := \mathcal{M} \times \mathbb{R}_\star^{n_1 \times l_1} \times \dots \times \mathbb{R}_\star^{n_D \times l_D}
\quad\text{and}\quad
\mathcal{G} := \mathrm{GL}(l_1) \times \dots \times \mathrm{GL}(l_D)
\]
and the group action
\begin{align*}
    \theta: \mathcal{G} \times \widetilde{\mathcal{M}} &\rightarrow \widetilde{\mathcal{M}}\\
    \left( (A_1, \dots, A_D), (\mathpzc{C}, U_1, \dots, U_D) \right) &\mapsto
    \left(
        \left(A_1^{-1}, \dots, A_D^{-1}\right)\cdot\mathpzc{C},\,
        U_1 A_1, \dots, U_D A_D
    \right)
.\end{align*}
Then $\widetilde{\mathcal{M}}/\mathcal{G}$ is a quotient manifold with a unique smooth structure so that the quotient map $[\cdot]_{\mathcal{G}}: \widetilde{\mathcal{M}} \rightarrow \widetilde{\mathcal{M}}/\mathcal{G}$ is a smooth submersion.
\end{proposition}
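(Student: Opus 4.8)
The plan is to invoke the quotient manifold theorem: if a Lie group acts smoothly, freely, and properly on a smooth manifold, then the orbit space carries a unique smooth structure for which the quotient map is a smooth submersion (see, e.g., \cite{Absil2008}). So it suffices to verify that $\mathcal{G}$ is a Lie group, that $\widetilde{\mathcal{M}}$ is a smooth manifold, and that $\theta$ is a smooth, free, and proper group action; the claimed smooth structure on $\widetilde{\mathcal{M}}/\mathcal{G}$ is then exactly the one provided by that theorem.

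Most of these checks are routine. The group $\mathcal{G}$ is a Lie group, being a finite product of general linear groups, and $\widetilde{\mathcal{M}}$ is a smooth manifold, being the product of the smooth submanifold $\mathcal{M}$ from \cref{def_Tuckercore} with the open submanifolds $\mathbb{R}^{n_d\times l_d}_\star\subseteq\mathbb{R}^{n_d\times l_d}$. The map $\theta$ is well defined because $(A_1^{-1},\dots,A_D^{-1})\cdot\mathpzc{C}\in\mathcal{M}$ by the $\mathcal{G}$-invariance in \cref{def_Tuckercore}, while $U_dA_d$ still has full column rank since $A_d$ is invertible; it is smooth because it is assembled from matrix multiplication, inversion on $\mathrm{GL}(l_d)$, and multilinear multiplication, all of which are smooth; and a direct computation of $\theta(h,\theta(g,x))$ shows it is a group action. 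It is free: if $\theta((A_d)_d,(\mathpzc{C},(U_d)_d))=(\mathpzc{C},(U_d)_d)$, then $U_dA_d=U_d$ for every $d$, and cancelling the full-column-rank matrix $U_d$ on the left forces $A_d=\mathds{1}_{l_d}$. This is the only step that genuinely uses the rank constraint built into $\widetilde{\mathcal{M}}$.

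The one nontrivial point, which I expect to be the main obstacle, is properness, which is not automatic since $\mathcal{G}$ is noncompact. I would verify the sequential criterion: given $x_n=(\mathpzc{C}_n,(U_{d,n})_d)\to x=(\mathpzc{C},(U_d)_d)$ and $\theta(g_n,x_n)\to y$ in $\widetilde{\mathcal{M}}$, with $g_n=(A_{d,n})_d$, one must extract a convergent subsequence of $(g_n)$. Writing the matrix components of $y$ as $(V_d)_d$, the defining formula for $\theta$ gives $U_{d,n}A_{d,n}\to V_d$, whereas $U_{d,n}\to U_d$ has full column rank; hence $A_{d,n}=U_{d,n}^{\dagger}\,(U_{d,n}A_{d,n})\to U_d^{\dagger}V_d$, using that the Moore--Penrose pseudoinverse is continuous on full-column-rank matrices. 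A short check then shows $U_d^{\dagger}V_d$ is invertible (the column spaces of $V_d$ and $U_d$ coincide in the limit) and that $\theta\bigl((U_d^{\dagger}V_d)_d,x\bigr)=y$, so $(g_n)$ converges in $\mathcal{G}$ and $\theta$ is proper. Conceptually, this properness is inherited from the fact that each factor $\mathbb{R}^{n_d\times l_d}_\star$ with the right $\mathrm{GL}(l_d)$-action is a principal bundle over the Grassmannian, the pseudoinverse merely making a local trivialisation explicit; everything else being bookkeeping with the definitions, this completes the proof.
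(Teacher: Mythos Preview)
Your proposal is correct and follows essentially the same route as the paper: invoke the quotient manifold theorem, dispatch smoothness and freeness as routine, and establish properness via the sequential criterion by recovering $A_{d,n}=U_{d,n}^{\dagger}(U_{d,n}A_{d,n})$ and using continuity of the Moore--Penrose inverse on full-rank matrices. If anything, you are slightly more careful than the paper in explicitly checking that the limit $U_d^{\dagger}V_d$ lies in $\mathrm{GL}(l_d)$.
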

\begin{proof}
By \cite[Theorem 21.10]{Lee2013}, we only need to verify that the action is smooth, free (i.e., it fixes the identity), and proper. The first two properties are straightforward to check.
To show that it is proper, consider the sequences $\{x_n\}_{n \in \mathbb{N}} \rightarrow x$ in $\widetilde{\mathcal{M}}$ and $\{ \mathfrak{a}_n \}_{n \in \mathbb{N}}$ in $\mathcal{G}$ where $\{ \theta(\mathfrak{a}_n, x_n) \}_{n \in \mathbb{N}}$ converges in $\widetilde{\mathcal{M}}$. By \cite[Proposition 21.5]{Lee2013}, $\theta$ is proper if $\{ \mathfrak{a}_n \}_{n \in \mathbb{N}}$ converges in $\mathcal{G}$. Consider the map $f: \widetilde{\mathcal{M}} \times \widetilde{\mathcal{M}} \rightarrow \mathcal{G}$ taking
\[
(\mathpzc{C},U_1,\dots,U_D), (\widehat{\mathpzc{C}}, \hat{U}_1,\dots, \hat{U}_D)
\mapsto 
(U_1^\dagger \hat{U}_1,\dots,U_D^\dagger \hat{U}_D),
\]
where $U_d^\dagger = (U_d^T U_d)^{-1} U_d^T$ is the Moore-Penrose inverse. Note $f(x_n, \theta(\mathfrak{a}_n, x_n)) = \mathfrak{a}_n$. Furthermore, $f$ is continuous by the continuity of the Moore-Penrose inverse. Since $\{(x_n, \theta(\mathfrak{a}_n, x_n))\}_{n \in \mathbb{N}}$ converges, so does $\{f(x_n, \theta(\mathfrak{a}_n, x_n))\}_{n \in \mathbb{N}} = \{\mathfrak{a}_n\}_{n \in \mathbb{N}}$.
\end{proof}

The tangent space to this quotient manifold is derived next.

\begin{proposition}
\label{prop:tangentQuotientMfd}
Take the manifold $\widetilde{\mathcal{M}} / \mathcal{G}$ as in \cref{prop:quotientMfd} and consider a point $\vb{x} = (\mathpzc{C}, U_1, \dots, U_D) \in \widetilde{\mathcal{M}}$. Complete each $U_d$ to an basis $[U_d \quad U_d^\perp]$ of $\mathbb{R}^{n_d}$. Then
\[
    T_{[\vb{x}]_{\mathcal{G}}}(\widetilde{\mathcal{M}} / \mathcal{G}) \cong \left\{
        \left(
        \dot{\mathpzc{C}}, U_1^\perp B_1, \dots, U_D^\perp B_D
        \right)
    \mid
        \dot{\mathpzc{C}} \in T_{\mathpzc{C}} \mathcal{M},\;
        B_d \in \mathbb{R}^{(n_d - l_d) \times l_d}
    \right\}.
\]
\end{proposition}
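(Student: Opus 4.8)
The plan is to exploit \cref{prop:quotientMfd}. Write $\pi := [\cdot]_{\mathcal{G}}$ for the quotient map; by \cref{prop:quotientMfd} it is a surjective smooth submersion, so at every $\vb{x}\in\widetilde{\mathcal{M}}$ the differential $\mathrm{d}\pi_{\vb{x}}\colon T_{\vb{x}}\widetilde{\mathcal{M}}\to T_{[\vb{x}]_{\mathcal{G}}}(\widetilde{\mathcal{M}}/\mathcal{G})$ is surjective with $\ker\mathrm{d}\pi_{\vb{x}} = T_{\vb{x}}\bigl(\pi^{-1}(\pi(\vb{x}))\bigr)$. Since two points of $\widetilde{\mathcal{M}}$ have the same image under $\pi$ exactly when they lie in the same $\mathcal{G}$-orbit, the fibre $\pi^{-1}(\pi(\vb{x}))$ is the orbit $\mathcal{G}\cdot\vb{x}$, which is an embedded submanifold because $\theta$ is free and proper; its tangent space at $\vb{x}$ is the image of the infinitesimal action, the \emph{vertical space} $V_{\vb{x}}$. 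Thus $T_{[\vb{x}]_{\mathcal{G}}}(\widetilde{\mathcal{M}}/\mathcal{G})\cong T_{\vb{x}}\widetilde{\mathcal{M}}/V_{\vb{x}}$, and it remains to (i) compute $V_{\vb{x}}$ and (ii) show that the set $H$ on the right-hand side of the proposition is a linear complement of $V_{\vb{x}}$ in $T_{\vb{x}}\widetilde{\mathcal{M}}$; then $\mathrm{d}\pi_{\vb{x}}$ restricts to an isomorphism $H\xrightarrow{\ \sim\ }T_{[\vb{x}]_{\mathcal{G}}}(\widetilde{\mathcal{M}}/\mathcal{G})$, which is the claim.

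For (i), each $\R^{n_d\times l_d}_\star$ is open in $\R^{n_d\times l_d}$, so $T_{\vb{x}}\widetilde{\mathcal{M}} = T_{\mathpzc{C}}\mathcal{M}\times\R^{n_1\times l_1}\times\dots\times\R^{n_D\times l_D}$. I would then differentiate the orbit map $\mathfrak{a}\mapsto\theta(\mathfrak{a},\vb{x})$ at the identity along curves $A_d(t)$ with $A_d(0)=\mathds{1}_{l_d}$, $\dot A_d(0)=\Omega_d\in\R^{l_d\times l_d}$. Using $\frac{\mathrm{d}}{\mathrm{d}t}\big|_{0}A_d(t)^{-1}=-\Omega_d$ and multilinearity of multilinear multiplication in its matrix slots, this yields
\[
V_{\vb{x}} = \Bigl\{ \bigl( -{\textstyle\sum_{d=1}^{D}}\,(\mathds{1}_{l_1},\dots,\Omega_d,\dots,\mathds{1}_{l_D})\cdot\mathpzc{C},\ U_1\Omega_1,\dots,U_D\Omega_D \bigr) : \Omega_d\in\R^{l_d\times l_d} \Bigr\},
\]
where $\Omega_d$ occupies the $d$th slot. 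Because each $U_d$ has full column rank, already $(\Omega_1,\dots,\Omega_D)\mapsto(U_1\Omega_1,\dots,U_D\Omega_D)$ is injective, so $\dim V_{\vb{x}}=\sum_{d=1}^{D}l_d^2=\dim\mathcal{G}$, consistent with freeness.

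For (ii), the set $H=\{(\dot{\mathpzc{C}},U_1^\perp B_1,\dots,U_D^\perp B_D):\dot{\mathpzc{C}}\in T_{\mathpzc{C}}\mathcal{M},\ B_d\in\R^{(n_d-l_d)\times l_d}\}$ is a linear subspace of $T_{\vb{x}}\widetilde{\mathcal{M}}$ with $\dim H=\dim\mathcal{M}+\sum_{d=1}^{D}(n_d-l_d)l_d$, and $\dim H+\dim V_{\vb{x}}=\dim\mathcal{M}+\sum_{d=1}^{D}n_d l_d=\dim T_{\vb{x}}\widetilde{\mathcal{M}}$. Hence it suffices to prove $V_{\vb{x}}\cap H=\{0\}$. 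If a vector lies in both, then $U_d^\perp B_d=U_d\Omega_d$ for each $d$; since $[\,U_d\ \ U_d^\perp\,]$ is invertible, $\operatorname{col}(U_d)\cap\operatorname{col}(U_d^\perp)=\{0\}$, so $U_d\Omega_d=U_d^\perp B_d=0$, whence $\Omega_d=0$ and $B_d=0$ by the full column ranks of $U_d$ and $U_d^\perp$; then the first component $-\sum_d(\mathds{1}_{l_1},\dots,\Omega_d,\dots,\mathds{1}_{l_D})\cdot\mathpzc{C}$ also vanishes. Therefore $T_{\vb{x}}\widetilde{\mathcal{M}}=V_{\vb{x}}\oplus H$ and $\mathrm{d}\pi_{\vb{x}}|_H$ is the asserted isomorphism.

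The argument is mostly elementary linear algebra plus standard facts on submersions, so I do not expect a real obstacle. The one point needing care is the identification $\ker\mathrm{d}\pi_{\vb{x}}=V_{\vb{x}}$ in the first paragraph --- equivalently, that the fibre of $\pi$ through $\vb{x}$ is exactly the orbit $\mathcal{G}\cdot\vb{x}$ and is embedded with tangent space the image of the infinitesimal action --- which is precisely where the freeness and properness established in \cref{prop:quotientMfd} enter; the rest is the dimension count and the transversality $V_{\vb{x}}\cap H=\{0\}$.
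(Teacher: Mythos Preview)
Your proof is correct and follows essentially the same vertical/horizontal decomposition strategy as the paper: compute the vertical space $V_{\vb{x}}$ as the tangent to the $\mathcal{G}$-orbit, exhibit $H$ as a complement, and invoke the general theory of quotient manifolds. The only minor difference is that the paper verifies $V_{\vb{x}}+H=T_{\vb{x}}\widetilde{\mathcal{M}}$ directly (explicitly using the $\mathrm{GL}$-invariance of $\mathcal{M}$ to show that vectors of the form $\sum_d(\mathds{1},\dots,\Omega_d,\dots,\mathds{1})\cdot\mathpzc{C}$ lie in $T_{\mathpzc{C}}\mathcal{M}$), whereas you reach the same conclusion by a dimension count; in your argument the $\mathrm{GL}$-invariance is used implicitly through the fact that $\theta$ is a well-defined action on $\widetilde{\mathcal{M}}$, so the orbit is contained in $\widetilde{\mathcal{M}}$ and hence $V_{\vb{x}}\subset T_{\vb{x}}\widetilde{\mathcal{M}}$.
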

\begin{proof}
Define the fibre $\mathcal{F}$ of all $\vb{x}^\prime$ equivalent to $\vb{x}$:
\[
\mathcal{F}_\vb{x} = \left\{\left(
(A_1, \dots, A_D) \cdot \mathpzc{C}, U_1 A_1^{-1},
\dots, U_D A_D^{-1}
\right)
\mid
A_d \in \mathrm{GL}(l_d),\; d = 1,\dots,D
\right\}
.\]
This allows us to define the vertical space as the tangent space to $\mathcal{F}$ at $\vb{x}$:
\begin{equation}
\label{eq:vertSpace}
\mathbb{V}_{\vb{x}} = \left\{
\left(
\sum_{d=1}^D \left( \dot{A}_d \otimes_d \bigotimes_{d' \ne d} \mathds{1}_{l_{d'}} \right) \mathpzc{C},\,
-U_1 \dot{A}_1, \dots, -U_D \dot{A}_D
\right)\mid
\dot{A}_d \in \mathbb{R}^{l_d \times l_d}
\right\}.
\end{equation}
In this expression, we used $T_{A_d} \mathrm{GL}(l_d) \cong \mathbb{R}^{l_d \times l_d}$\cite{Lee2013} for each $d=1,\dots,D$. Now define the horizontal space as
\[
\mathbb{H}_{\vb{x}} := \left\{
\left(
\dot{\mathpzc{C}}, U_1^\perp B_1, \dots, U_d^\perp B_d
\right)
\mid
\dot{\mathpzc{C}} \in T_{\mathpzc{C}} \mathcal{M},\;
B_d \in \mathbb{R}^{(n_d - l_d) \times l_d}
\right\}
.\]
We will show that $\mathbb{V}_{\vb{x}} \oplus \mathbb{H}_{\vb{x}} = T_{\vb{x}} \widetilde{\mathcal{M}}$. First, we verify that the intersection is trivial.
Take $\xi \in \mathbb{V}_{\vb{x}}$, parametrised as in \cref{eq:vertSpace}. If also $\xi \in  \mathbb{H}_{\vb{x}}$, by construction of~$U^d_\perp$, it must hold that all $\dot{A}_d$ in the parametrisation of $\xi$ are zero and hence $\xi = \vb{0}$.

Next, we show that the sum is $T_{\vb{x}} \widetilde{\mathcal{M}}$.
We know $\mathcal{M}$ is invariant under the application of $\mathrm{GL}(l_1) \times \dots \times \mathrm{GL}(l_D)$. Therefore, for any $\dot{A}_d \in \mathbb{R}^{l_d \times l_d}$ for $d = 1,\dots,D$, there exist curves over $\mathcal{M}$ of the form
$
\gamma(t) = \left(A_1(t), \dots, A_D(t)\right) \cdot \mathpzc{C}
$
with $A_d(0) = \mathds{1}_{l_d}$ and $\frac{\mathrm{d}}{\mathrm{d}t}\vert_{t=0} A_d(t) = \dot{A}_d$. Hence, all tensors of the form
\[
\gamma^\prime(0) = \sum_{d=1}^D \left( \dot{A}_d \otimes_d \bigotimes_{d' \ne d} \mathds{1}_{l_{d'}} \right) \mathpzc{C}
\quad \text{with} \quad
\dot{A}_d \in \mathbb{R}^{l_d \times l_d} ,\quad d = 1,\dots,D,
\]
are tangent to $\mathcal{M}$ at $\vb{x}$. Because of this, it is easy to check that
\[
\mathbb{V}_{\vb{x}} \oplus \mathbb{H}_{\vb{x}}
= T_{\mathpzc{C}}\mathcal{M} \times \mathbb{R}^{n_1 \times l_1} \times \dots \times \mathbb{R}^{n_D \times l_D}
= T_{\vb{x}} \widetilde{\mathcal{M}}
.\]
By the general theory of quotient manifolds, this establishes $\mathbb{H}_{\vb{x}} \cong T_{[\vb{x}]_{\mathcal{G}}} \left(\widetilde{\mathcal{M}} / \mathcal{G} \right)$, where the isomorphism is the unique horizontal lift \cite[Section 3.5.8]{Absil2008}.
\end{proof}

We have established that Tucker decompositions with a structured core form a smooth manifold. By definition, a point on an $\mathcal{M}$-structured Tucker manifold corresponds to a Tucker decomposition that is unique up to basis transform. We now have all the tools we need to show that $\mathcal{M}^{n_1,\dots,n_D}$ is a manifold. We do this next.

\begin{proposition}
\label{prop:GTuckerIsMfd}
Let $\widetilde{\mathcal{M}}/\mathcal{G}$ be as in \cref{prop:quotientMfd} and let $\mathcal{M}^{n_1,\dots,n_D}$ be the $\mathcal{M}$-structured Tucker manifold. Then $\mathcal{M}^{n_1,\dots,n_D}$ is a smooth embedded submanifold of $\mathbb{R}^{n_1 \times \dots \times n_D}$ and the following is a diffeomorphism:
\begin{align*}
    \Phi: \widetilde{\mathcal{M}}/\mathcal{G} & \rightarrow \mathcal{M}^{n_1,\dots,n_D} \\
    [(\mathpzc{C}, U_1, \dots, U_D)]_{\mathcal{G}} &\mapsto (U_1, \dots, U_D) \cdot \mathpzc{C}
.\end{align*}
Moreover, the tangent space to $\mathcal{M}^{n_1,\dots,n_D}$ at $(U_1, \dots, U_D) \cdot \mathpzc{C}$ is generated by all tensors
\begin{equation}
    \label{eq:TspaceGTucker}
    (U_1, \dots, U_D) \cdot \dot{\mathpzc{C}}
    + \sum_{d=1}^D \left( \dot{U}_d \otimes_d \bigotimes_{d' \ne d} U_{d'} \right) \mathpzc{C}
\end{equation}
with $\dot{\mathpzc{C}} \in T_{\mathpzc{C}} \mathcal{M}$
and $U_d^\dagger \dot{U}_d = \vb{0}_{l_d \times l_d}$ for all $d=1,\dots,D$.
\end{proposition}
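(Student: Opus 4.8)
The plan is to transport all the structure already established for the quotient manifold $\widetilde{\mathcal{M}}/\mathcal{G}$ in \cref{prop:quotientMfd,prop:tangentQuotientMfd} along the map $\Phi$, and to show that $\Phi$ is a smooth embedding onto $\mathcal{M}^{n_1,\dots,n_D}$. First I would observe that $\Phi$ is well-defined: if $[(\mathpzc{C},U_1,\dots,U_D)]_{\mathcal{G}} = [(\mathpzc{C}',U_1',\dots,U_D')]_{\mathcal{G}}$, then the two Tucker decompositions differ by the action of some $(A_1,\dots,A_D)\in\mathcal{G}$, and a direct computation with multilinear multiplication shows $(U_1,\dots,U_D)\cdot\mathpzc{C} = (U_1 A_1^{-1},\dots)\cdot((A_1,\dots)\cdot\mathpzc{C})$, so the image is the same tensor. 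Surjectivity onto $\mathcal{M}^{n_1,\dots,n_D}$ is immediate from \cref{def:GTucker}. For injectivity, I would use the fact that each $\mathpzc{C}\in\mathcal{M}$ has full multilinear rank $(l_1,\dots,l_D)$: if $(U_1,\dots,U_D)\cdot\mathpzc{C} = (U_1',\dots,U_D')\cdot\mathpzc{C}'$, then looking at the $d$th unfolding shows $\operatorname{col}(U_d) = \operatorname{col}(U_d')$ (both equal the column span of the $d$th unfolding of the tensor, since the core is concise), hence $U_d' = U_d A_d$ for some $A_d\in\mathrm{GL}(l_d)$, and then the cores are related by the corresponding action, so the two representatives lie in the same $\mathcal{G}$-orbit.

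Next I would show $\Phi$ is an immersion by computing its differential and identifying it with the horizontal space $\mathbb{H}_{\vb{x}}$ from \cref{prop:tangentQuotientMfd}. Lifting a tangent vector $([\dot{\mathpzc{C}}, U_1^\perp B_1,\dots,U_D^\perp B_D])$ to a curve in $\widetilde{\mathcal{M}}$ and differentiating $(U_1(t),\dots)\cdot\mathpzc{C}(t)$ at $t=0$ via the product rule yields exactly an expression of the form \cref{eq:TspaceGTucker}, with $\dot{U}_d = U_d^\perp B_d$; since $U_d^\dagger U_d^\perp = \vb{0}$, the gauge condition $U_d^\dagger \dot{U}_d = \vb{0}$ is automatic, and conversely any $\dot U_d$ with $U_d^\dagger\dot U_d=\vb 0$ has the form $U_d^\perp B_d$. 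To see $d\Phi$ is injective, suppose the tensor in \cref{eq:TspaceGTucker} vanishes; apply $(U_1^\dagger,\dots,U_D^\dagger)$ and also contract against the complementary projectors $U_d^\perp (U_d^\perp)^\dagger$ in each mode to peel off the terms one at a time, using conciseness of $\mathpzc{C}$ to conclude each $U_d^\perp B_d = \vb{0}$ and $\dot{\mathpzc{C}} = \vb{0}$. Since $\dim(\widetilde{\mathcal{M}}/\mathcal{G}) = \dim T_\vb{x}\widetilde{\mathcal{M}} - \dim\mathbb{V}_\vb{x} = \dim\mathbb{H}_\vb{x}$ matches the dimension of the image of $d\Phi$, $\Phi$ is an immersion and its image is the claimed tangent space.

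Finally, to upgrade "injective immersion" to "embedding with embedded image", I would either invoke that $\widetilde{\mathcal{M}}/\mathcal{G}$ is a manifold of the correct dimension and exhibit a continuous local inverse of $\Phi$ — namely the map sending a tensor $\mathpzc{A}'$ near $\mathpzc{A}$ to the $\mathcal{G}$-class of $(\,(U_1^\dagger,\dots,U_D^\dagger)\cdot\mathpzc{A}',\, U_1,\dots,U_D)$, which is continuous and inverts $\Phi$ on a neighbourhood since the factor matrices can be recovered continuously from the dominant column spaces of the unfoldings — or, more cleanly, observe that $\mathcal{M}^{n_1,\dots,n_D}$ is a $\mathcal{G}$-invariant (indeed $(\mathrm{GL}(n_1)\times\cdots)$-invariant in the relevant sense) locally closed subset and apply the constant-rank / submersion machinery. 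The main obstacle I anticipate is precisely this last point: proving properness/closedness of the image so that the injective immersion is genuinely an embedded submanifold rather than merely an immersed one. I would handle it by the local-inverse argument above, using the continuity of the Moore–Penrose inverse and of the map extracting an orthonormal basis of the column span of a full-rank unfolding, exactly as in the properness argument of \cref{prop:quotientMfd}; everything else is a routine differential-geometry computation of the kind carried out in \cite{Uschmajew2013a}.
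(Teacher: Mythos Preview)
Your overall strategy matches the paper's proof: establish that $\Phi$ is a bijection, an immersion (via the horizontal space of \cref{prop:tangentQuotientMfd}), and a homeomorphism, then invoke \cite[Proposition 5.2]{Lee2013}. The immersion argument you sketch---peeling off terms with $U_d^\dagger$ and complementary projectors using conciseness of $\mathpzc{C}$---is a valid alternative to the paper's approach, which instead shows directly that $d\Phi$ sends the horizontal basis to a linearly independent set by observing that the image blocks $T_0,\dots,T_D$ are pairwise orthogonal and each block is independent because $\mathpzc{C}_{(d)}$ has full row rank.

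There is, however, a concrete error in your proposed local inverse. The map $\mathpzc{A}'\mapsto [(\,(U_1^\dagger,\dots,U_D^\dagger)\cdot\mathpzc{A}',\,U_1,\dots,U_D)]_{\mathcal G}$ with \emph{fixed} $U_d$ is not an inverse of $\Phi$ near $\mathpzc{A}$: applying $\Phi$ to its output gives $(U_1 U_1^\dagger,\dots,U_D U_D^\dagger)\cdot\mathpzc{A}'$, the orthogonal projection of $\mathpzc{A}'$ onto the fixed tensor subspace $\operatorname{col}(U_1)\otimes\dots\otimes\operatorname{col}(U_D)$, and a generic nearby $\mathpzc{A}'\in\mathcal{M}^{n_1,\dots,n_D}$ does not live in that subspace. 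You must let the factor matrices vary with $\mathpzc{A}'$, as you hint at afterwards. The paper carries this out carefully: it defines $V_d(\mathpzc{X})\in\mathrm{Gr}(n_d,l_d)$ as the column span of $\mathpzc{X}_{(d)}$, proves continuity of $V_d$ via a column-selection argument (choosing $l_d$ columns of $\mathpzc{X}_{(d)}$ that remain independent under small perturbations by upper semicontinuity of rank), and then builds $\Phi^{-1}$ as the composition of the $V_d$, the Moore--Penrose inverse, multilinear multiplication, and the quotient projection. Your parenthetical ``extracting an orthonormal basis of the column span of a full-rank unfolding'' is the right idea, but the explicit formula you wrote down would fail.
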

\begin{proof}
    By \cite[Proposition 5.2]{Lee2013}, the first claim holds if $\Phi$ is both a homeomorphism and a smooth immersion.
    First, we show that it is a homeomorphism. Note that $\Phi$ is a bijection because $\mathcal{M}^{n_1,\dots,n_D}$ is precisely the set of all tensors with a Tucker decomposition where the core is in $\mathcal{M}$. Since $\Phi$ is induced by a polynomial map, it is also continuous. To show that $\Phi^{-1}$ is continuous, consider the maps
    \begin{align*}
        V_d: \mathcal{M}^{n_1,\dots,n_D} &\rightarrow \mathrm{Gr}(n_d, l_d) \\
        (U_1, \dots, U_D) \cdot \mathpzc{C} &\mapsto [U_d]_{\mathrm{GL}(l_d)}
    \end{align*}
    where $\mathrm{Gr}(n_d, l_d) \cong \mathbb{R}^{n_d \times l_d}_\star / \mathrm{GL}(l_d)$ is the Grassmannian of $n_d$-dimensional linear spaces in $\mathbb R^{l_d}$ \cite{Absil2008}. That is, $V_d(\mathpzc{X})$ is the column span of its $d$th flattening $\mathpzc{X}_{(d)}$.

We will demonstrate continuity of $V_d$ at any $\mathpzc{X} \in \mathcal{M}^{n_1,\dots,n_D}$ by showing that any open neighbourhood $\mathcal{V}$ of $V_d(\mathpzc{X})$ contains the image of a neighbourhood of $\mathpzc{X}$ \cite[Theorem 18.1]{Munkres2014}. By the definition of the quotient topology, $\mathcal{V} = [\mathcal{U}]_{\mathrm{GL}(l_d)}$ for some open neighbourhood $\mathcal{U} \subseteq \mathbb{R}^{n_d \times l_d}_\star$ of $U_d$, where $U_d$ is any representative of $V_d(\mathpzc{X})$. Furthermore, for some ball $B_\varepsilon(U_d)$ of radius $\varepsilon$ centered at $U_d$, we have $[B_\varepsilon(U_d)]_{\mathrm{GL}(l_d)} \subseteq [\mathcal{U}]_{\mathrm{GL}(l_d)} = \mathpzc{V}$.

Now we exploit the liberty of choosing the representative $U_d$.
    Observe that $V_d(\mathpzc{X})$ is the span of~$l_d$ columns of $\mathpzc{X}_{(d)}$. In other words,
    there exists a column selection operator $P_d \in \mathbb{R}^{(\prod_{d' \ne d} n_{d'}) \times l_d}$, so that $V_d(\mathpzc{X}) = [\mathpzc{X}_{(d)} P_d]_{\mathrm{GL}(l_d)}$. By the upper semicontinuity of matrix rank, 
    there exists $0 < \delta < \varepsilon$ so that any perturbation to $\mathpzc{X}$ of norm less than $\delta$ does not change the rank of $\mathpzc{X}_{(d)}$ or $\mathpzc{X}_{(d)}P_d$. Hence, $V_d(\tilde{\mathpzc{X}}) = [\tilde{\mathpzc{X}}_{(d)} P_d]_{\mathrm{GL}(l_d)}$ for any $\tilde{\mathpzc{X}}$ in a ball $B_{\delta}(\mathpzc{X})$ of radius $\delta$. Because $\norm{\tilde{\mathpzc{X}}_{(d)} P_d - \mathpzc{X}_{(d)} P_d} < \delta < \varepsilon$, we have $V_d(B_{\delta}(\mathpzc{X})) \subseteq [B_{\varepsilon}(\mathpzc{X}_{(d)} P_d)]_{\mathrm{GL}(l_d)}$, which proves continuity of $V_d$.

    For any $\mathpzc{X} \in \mathcal{M}^{n_1,\dots,n_D}$, let $V_d(\mathpzc{X}) = [U_d]_{\mathrm{GL}(l_d)}$ for each $d$. It can be verified that the following is independent of the representatives $U_d$:
    \[
    \Psi(\mathpzc{X}) := [((U_1^\dagger, \dots, U_D^\dagger) \cdot \mathpzc{X}, U_1, \dots, U_D)]_{\mathcal{G}}
    .\]
    The right-hand side is the Tucker decomposition of $\mathpzc{X}$, which is unique up to the action of $\mathpzc{G}$. Hence, $\Psi = \Phi^{-1}$.  This shows that $\Phi^{-1}$ is the composition of continuous maps: $V_d$ for each $d$, the Moore-Penrose inverse, multilinear multiplication, and the canonical projection map. Hence, $\Phi^{-1}$ is continuous.

    Next, we show that $\Phi$ is an immersion, i.e., that its derivative maps a basis to a basis, in which case $T_{\mathpzc{X}}\mathcal{M}^{n_1,\dots,n_D}$ is the image of $\mathrm{d}\Phi$. Fix a basis $\mathscr{B}_{0}$ of $T_{\mathpzc{C}} \mathcal{M}$ and, for each $d = 1,\dots,D$, a basis $\mathscr{B}_d$ of all $\dot{U}_d \in \mathbb{R}^{n_d \times l_d}$ so that $U_d^\dagger \dot{U}_d = 0$.
    By \cref{prop:tangentQuotientMfd}, the tangent space of $\widetilde{\mathcal{M}}/\mathcal{G}$ can be considered as a product space generated by the canonical product basis derived from $\mathscr{B}_0,\dots,\mathscr{B}_D$.

    Applying $\mathrm{d}\Phi$ to this basis of $\widetilde{\mathcal{M}}/\mathcal{G}$ gives tangents of the form
    \[
    T_0 := \left\{
    (U_1, \dots, U_D) \cdot \dot{\mathpzc{C}}
    \right\}
    \quad\text{and}\quad
    T_{d} := \left\{
    \left(U_1, \dots, U_{d-1}, \dot{U}_d, U_{d+1}, \dots, U_D \right) \cdot \mathpzc{C}
    \right\}
    \]
    in which $\mathpzc{C} \in \mathscr{B}_0$ and $\dot{U}_d \in \mathscr{B}_d$, $d = 1,\dots,D$.
    Note that the sets $T_i$ and $T_j$ with $i \ne j$ are pairwise orthogonal due to the constraint on $\dot{U}_d$. Since $(U_1 \otimes \dots \otimes U_D)$ has full rank, $T_0$ is linearly independent. The tangents in the set $T_d$ with $d \ge 1$ are tensors whose $d$th unfolding is
    \begin{equation}
    \label{eq:unfoldingTangents}
        \dot{U}_d \mathpzc{C}_{(d)} (U_1 \otimes \dots \otimes U_{d-1} \otimes U_{d+1} \otimes \dots \otimes U_D)^T.
    \end{equation}
    Recall from \cref{def:GTucker} that $\mathpzc{C}_{(d)}$ and all $U_i$ have full row rank. For a set of linearly independent matrices $\dot{U}_d$, all matrices \cref{eq:unfoldingTangents} are linearly independent. This shows that $\Phi$ is an immersion. By \cite[Proposition 5.2]{Lee2013}, $\mathcal{M}^{n_1,\dots,n_D}$ is a manifold and $\Phi$ is a diffeomorphism. By \cite[Theorem 4.14]{Lee2013}  $T_{\mathpzc{X}}\mathcal{M}^{n_1,\dots,n_D}$ is the image of $\mathrm{d}\Phi$.
\end{proof}

Note that \cref{prop_itsamanifold} is a corollary of the previous statement.

\section{Computing the condition number}
\label{sec:computeCond}
Having shown that the structured Tucker decompositions form a manifold, we can investigate their condition number using the tools from \cite{Breiding2018a}.
For this, we first derive an orthonormal basis of the structured Tucker manifold, so that the condition number can be computed with efficient algorithms from linear algebra using \cref{eq:computeJoinCond} below. We present some examples, as well as useful estimates of the condition number of SBTDs.

\subsection{A direct algorithm}
\label{sec:algorithm}
Let
$\mathcal{M}_1^{n_1,\dots,n_D}, \dots, \mathcal{M}_R^{n_1,\dots,n_D}$ be structured Tucker manifolds, and recall the addition map
\[
\Sigma: \mathcal{M}_1^{n_1,\dots,n_D}\times \dots\times \mathcal{M}_R^{n_1,\dots,n_D}\to \mathbb R^{n_1\times \dots\times n_D},\;(\mathpzc{A}_1,\dots,\mathpzc{A}_R)\mapsto \mathpzc{A}_1+\dots+\mathpzc{A}_R
\]
from the introduction.
Computing an SBTD translates to finding a decomposition $(\mathpzc{A}_1,\dots,\mathpzc{A}_R)$ so that $\Sigma(\mathpzc{A}_1,\dots,\mathpzc{A}_R) = \mathpzc{A}$. The condition number $\kappa^{\mathrm{SBTD}}(\mathpzc{A}_1,\dots, \mathpzc{A}_R)$ from \cref{eq:defcond} is computed as follows \cite{Breiding2018a}. For $r = 1,\dots,R$, compute orthonormal bases of $T_{\mathpzc{A}_r}\mathcal{M}_r^{n_1,\dots,n_D}$, the tangent space to $\mathcal{M}_r^{n_1,\dots,n_D}$ at~$\mathpzc{A}_r$. The basis vectors are the columns of matrices $T_r$.
Then, the so-called \textit{Terracini matrix} is constructed as
\begin{equation}
    \label{eq:Terracini}
T_{\mathpzc{A}_1,\dots, \mathpzc{A}_R} := \begin{bmatrix} T_1 & \dots & T_R \end{bmatrix}.
\end{equation}
 The condition number satisfies
\begin{equation}
    \label{eq:computeJoinCond}
\kappa^{\mathrm{SBTD}}(\mathpzc{A}_1, \dots, \mathpzc{A}_R ) = \frac{1}{\sigma_{\min}(T_{\mathpzc{A}_1,\dots, \mathpzc{A}_R})},
\end{equation}
where $\sigma_{\min}(A) = \sigma_{\min\{m,n\}}(A)$ denotes the smallest singular value of $A \in \R^{m \times n}$. Thus, the computation of~$\kappa^{\mathrm{SBTD}}$ requires orthonormal bases of the tangent spaces to the structured Tucker manifolds. We explain this in the next proposition.

Recall that the compact higher-order singular value decomposition (HOSVD) \cite{Tucker1966,DeLathauwer2000} is an orthogonal Tucker decomposition
$
 \mathpzc{X} = (U_1, \dots, U_D) \cdot \mathpzc{C}
$
with $U_d \in \R^{n_d \times l_d}$ a basis of left singular vectors of $\mathpzc{X}_{(d)}$ corresponding to the nonzero singular values. In particular, $U_d^T U_d = \mathds{1}_{l_d}$ and the columns of $U_d$ span the column span of $\mathpzc{X}_{(d)}$. The core tensor $\mathpzc{C}$ is the orthogonal projection of $\mathpzc{X}$ onto the orthonormal basis $U_1 \otimes \dots \otimes U_D$: $\mathpzc{C} = (U_1^T, \dots, U_D^T) \cdot \mathpzc{X}$. With this terminology in place, we can state the result.

\begin{proposition}
    \label{prop:GTuckerOrthBasis}
    Let $\mathcal{M}^{n_1,\dots,n_D} \subseteq \mathbb{R}^{n_1 \times \dots \times n_D}$ be the $\mathcal{M}$-structured Tucker manifold with Tucker core structure $\mathcal{M} \subseteq \mathbb{R}^{l_1 \times \dots \times l_D}$.
    Assume that we are given a tensor $\mathpzc{X}\in \mathcal{M}^{n_1,\dots,n_D}$ expressed in HOSVD format $\mathpzc{X} = \left( U_1, \dots, U_D \right) \cdot \mathpzc{C}$. Complete each $U_d$ to an orthonormal basis $\begin{bmatrix}U_d & U_d^\perp\end{bmatrix}$ of $\mathbb{R}^{n_d}$. Let $\sigma_{j}^d := \norm{ \vb{e}_j^T \mathpzc{C}_{(d)}}$ and $\hat{\vb{u}}_j^d := (\sigma_j^d)^{-1} \vb{e}_j^{(l_d)}$.
    If $\mathscr{B}_{\mathpzc{C}}$ is an orthonormal basis for $T_{\mathpzc{C}} \mathcal{M}$, the following is an orthonormal basis of the tangent space $T_{\mathpzc{X}} \mathcal{M}^{n_1,\dots,n_D}$:
    \begin{equation}
        \label{eq:GTuckerOrthBasis}
    \mathscr{B}_{\mathpzc{X}} :=
    \left\{
        (U_1, \dots, U_D) \cdot \dot{\mathpzc{C}}
    \right\}
    \cup
    \left\{
        (U_1, \dots, U_{d-1}, U_d^\perp \vb{e}_i (\hat{\vb{u}}_{j}^d)^T, U_{d+1}, \dots, U_D) \cdot \mathpzc{C}
    \right\}
    \end{equation}
    in which $\dot{\mathpzc{C}} \in \mathscr{B}_{\mathpzc{C}}$, $d=1,\dots,D$, $i = 1,\dots,n_d - l_d$ and $j = 1,\dots,l_d$.
\end{proposition}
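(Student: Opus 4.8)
The plan is to verify three claims: every tensor listed in \cref{eq:GTuckerOrthBasis} lies in $T_{\mathpzc{X}}\mathcal{M}^{n_1,\dots,n_D}$; the listed tensors are pairwise orthonormal; and there are exactly $\dim T_{\mathpzc{X}}\mathcal{M}^{n_1,\dots,n_D}$ of them. Since an orthonormal family is linearly independent, the second and third claims together promote $\mathscr{B}_{\mathpzc{X}}$ to an orthonormal basis. Throughout I would use that the decomposition is in HOSVD form, so $U_d^T U_d = \mathds{1}_{l_d}$ and $U_d^\dagger = U_d^T$, and that $\begin{bmatrix}U_d & U_d^\perp\end{bmatrix}$ is orthonormal, so $U_d^T U_d^\perp = \vb{0}$ and $(U_d^\perp)^T U_d^\perp = \mathds{1}_{n_d-l_d}$; moreover $\sigma_j^d > 0$ because $\mathpzc{C}$ has full multilinear rank by \cref{def_Tuckercore}, so $\hat{\vb{u}}_j^d$ is well defined.

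For the first and third claims I would compare with \cref{prop:GTuckerIsMfd}, which identifies $T_{\mathpzc{X}}\mathcal{M}^{n_1,\dots,n_D}$ with the set of tensors \cref{eq:TspaceGTucker} subject to $\dot{\mathpzc{C}}\in T_{\mathpzc{C}}\mathcal{M}$ and $U_d^T\dot{U}_d = \vb{0}$ for all $d$. The first family in \cref{eq:GTuckerOrthBasis} arises from $\dot{\mathpzc{C}}\in\mathscr{B}_{\mathpzc{C}}$ with all $\dot{U}_d = \vb{0}$, and the $(d,i,j)$-th tensor of the second family from $\dot{\mathpzc{C}} = \vb{0}$, $\dot{U}_{d'} = \vb{0}$ for $d'\ne d$, and $\dot{U}_d = U_d^\perp\vb{e}_i(\hat{\vb{u}}_j^d)^T$, which satisfies $U_d^T\dot{U}_d = (U_d^T U_d^\perp)\vb{e}_i(\hat{\vb{u}}_j^d)^T = \vb{0}$; this settles membership. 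Combining \cref{prop:tangentQuotientMfd} with the diffeomorphism in \cref{prop:GTuckerIsMfd} gives $\dim T_{\mathpzc{X}}\mathcal{M}^{n_1,\dots,n_D} = \dim T_{\mathpzc{C}}\mathcal{M} + \sum_{d=1}^D (n_d-l_d)l_d$, while $\mathscr{B}_{\mathpzc{X}}$ lists $\dim T_{\mathpzc{C}}\mathcal{M}$ tensors of the first kind and $(n_d-l_d)l_d$ of the $d$-th kind, so the counts agree once orthonormality shows these tensors are pairwise distinct.

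The main tool for orthonormality is the identity $\langle (A_1,\dots,A_D)\cdot\mathpzc{Y}, (B_1,\dots,B_D)\cdot\mathpzc{Z}\rangle = \langle\mathpzc{Y}, (A_1^T B_1,\dots,A_D^T B_D)\cdot\mathpzc{Z}\rangle$, valid whenever each $A_d$ and $B_d$ have the same number of rows, which follows from $(A_1\otimes\dots\otimes A_D)^T(B_1\otimes\dots\otimes B_D) = A_1^T B_1\otimes\dots\otimes A_D^T B_D$. Write $T_0$ for the first family in \cref{eq:GTuckerOrthBasis} and $T_d$ for its $d$-th block. Within $T_0$ the identity collapses to $\langle\dot{\mathpzc{C}},\dot{\mathpzc{C}}'\rangle$ since every factor is $U_d^T U_d = \mathds{1}$, which is closed by the orthonormality of $\mathscr{B}_{\mathpzc{C}}$. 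A pair drawn from $T_0$ and some $T_d$, or from $T_d$ and $T_{d'}$ with $d\ne d'$, is orthogonal because some Kronecker factor equals $U_d^T U_d^\perp(\cdots) = \vb{0}$ or $(\cdots)(U_d^\perp)^T U_d = \vb{0}$. The only case left is a pair inside a single $T_d$ with indices $(i,j)$ and $(i',j')$: there all factors are identities except the $d$-th, which equals $(\vb{e}_i^T\vb{e}_{i'})\,\hat{\vb{u}}_j^d(\hat{\vb{u}}_{j'}^d)^T$ by $(U_d^\perp)^T U_d^\perp = \mathds{1}$, so for $i\ne i'$ it vanishes, and for $i = i'$ the inner product becomes $(\hat{\vb{u}}_{j'}^d)^T\mathpzc{C}_{(d)}\mathpzc{C}_{(d)}^T\hat{\vb{u}}_j^d$ after passing to the mode-$d$ unfolding.

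I expect this last step to be the one requiring care, since it is where the HOSVD is genuinely exploited. The all-orthogonality property of the HOSVD says $\mathpzc{C}_{(d)}\mathpzc{C}_{(d)}^T$ is diagonal: unfolding $\mathpzc{X} = (U_1,\dots,U_D)\cdot\mathpzc{C}$ in mode $d$ gives $\mathpzc{X}_{(d)} = U_d\,\mathpzc{C}_{(d)}\,W^T$ with $W$ the Kronecker product of the remaining $U_{d'}$ and $W^T W = \mathds{1}$, so $\mathpzc{X}_{(d)}\mathpzc{X}_{(d)}^T = U_d\,(\mathpzc{C}_{(d)}\mathpzc{C}_{(d)}^T)\,U_d^T$; multiplying this by $U_d^T$ on the left and $U_d$ on the right, and using that the columns of $U_d$ are left singular vectors of $\mathpzc{X}_{(d)}$ together with $U_d^T U_d = \mathds{1}$, shows $\mathpzc{C}_{(d)}\mathpzc{C}_{(d)}^T$ is diagonal with $j$-th diagonal entry $\norm{\vb{e}_j^T\mathpzc{C}_{(d)}}^2 = (\sigma_j^d)^2$. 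Hence $(\hat{\vb{u}}_{j'}^d)^T\mathpzc{C}_{(d)}\mathpzc{C}_{(d)}^T\hat{\vb{u}}_j^d = (\sigma_j^d\sigma_{j'}^d)^{-1}\,\delta_{jj'}\,(\sigma_j^d)^2 = \delta_{jj'}$, which completes the orthonormality check and, with the dimension count, the proposition.
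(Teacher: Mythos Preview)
Your proof is correct and follows essentially the same approach as the paper: both split the tangent space into the blocks $T_0,T_1,\dots,T_D$, use $U_d^T U_d^\perp=\vb{0}$ for cross-block orthogonality, and exploit the all-orthogonality of the HOSVD core (i.e., diagonality of $\mathpzc{C}_{(d)}\mathpzc{C}_{(d)}^T$) to finish the within-$T_d$ inner products. The only cosmetic difference is that the paper argues by spanning each block $\mathbb{T}_d$, whereas you argue by membership plus a dimension count; these are logically equivalent here.
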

\begin{proof}
\Cref{eq:TspaceGTucker} for $T_{\mathpzc{X}} \mathcal{M}^{n_1,\dots,n_D}$ suggests a decomposition of the tangent space of the form
\[
T_{\mathpzc{X}} \mathcal{M}^{n_1,\dots,n_D} = \mathbb{T}_0 \oplus \mathbb{T}_1 \oplus \dots \oplus \mathbb{T}_D,
\]
where $\mathbb{T}_0$ contains all tangents of the form $(U_1, \dots, U_D) \cdot \dot{\mathpzc{C}}$ and $\mathbb{T}_d$ with $d = 1,\dots,D$ contains the tangents of the form $(U_1, \dots, U_{d-1}, \dot{U}_d, U_{d+1}, \dots, U_D) \cdot \mathpzc{C}$. As argued in the proof of \cref{prop:GTuckerIsMfd}, this is a decomposition of $T_{\mathpzc{X}}\mathcal{M}^{n_1,\dots,n_D}$ into pairwise orthogonal spaces.

First we verify that \cref{eq:GTuckerOrthBasis} spans $T_\mathpzc{X} \mathcal{M}^{n_1,\dots,n_D}$.
Since we have an orthonormal basis of $T_{\mathpzc{C}} \mathcal{M}$ available, we have
\[
\mathbb{T}_0 = \mathrm{span}
\left\{
(U_1, \dots, U_D) \cdot \dot{\mathpzc{C}}
\mid
\dot{\mathpzc{C}} \in \mathscr{B}_{\mathpzc{C}}
\right\} = (U_1 \otimes \dots \otimes U_D)(T_\mathpzc{C} \mathcal{M})
.\]
For the other $D$ subspaces $\mathbb{T}_d$, we require all $\dot{U}_d$ such that $U_d^T \dot{U}_d = 0$, or equivalently $\dot{U}_d = U^\perp_d B$ for some $B \in \mathbb{R}^{(n_d - l_d) \times l_d}$.
The $\vb{e}_i^{(n_d - l_d)} (\hat{\vb{u}}_j^d)^T$ with $i = 1,\dots,n_d - l_d$ and $j = 1,\dots,l_d$ are a basis of $\mathbb{R}^{(n_d - l_d) \times l_d}$, because they are just a rescaling of the canonical basis $\vb{e}_i^{(n_d - l_d)} (\vb{e}_j^{(l_d)})^T$.
Substituting each of these for $B$, we get a basis of all allowed $\dot{U}_d$. This parametrises all of $\mathbb{T}_d$ as
\[
\mathbb{T}_d = \mathrm{span}
\left\{
(U_1, \dots, U_{d-1}, U_d^\perp \vb{e}_i (\hat{\vb{u}}_{j}^d)^T, U_{d+1}, \dots, U_D) \cdot  \mathpzc{C}
\right\}_{i=j=1}^{i=n_d-l_d, j=l_d}
.\]
Hence, the proposed basis $\mathscr{B}_{\mathpzc{X}}$ generates $\mathbb{T}_0 \oplus \mathbb{T}_1 \oplus \dots \oplus \mathbb{T}_D$.

We have yet to verify that the proposed basis is orthonormal.
We already know that $\mathbb{T}_0,\dots,\mathbb{T}_D$ are pairwise orthogonal. It thus suffices to show that the bases we constructed for each of these spaces separately are orthonormal.
The basis for $\mathbb{T}_0$ is orthonormal because $\mathscr{B}_{\mathpzc{C}}$ is orthonormal and $U_1 \otimes \dots \otimes U_D$ is an orthonormal tensor product basis.

For the basis of $\mathbb{T}_d$ with $d \ge 1$, we use the fact that $\mathpzc{X}$ is in HOSVD format. This ensures that an HOSVD of $\mathpzc{C}$ is $(\mathds{1}, \dots, \mathds{1}) \cdot \mathpzc{C}$.
In other words, $\mathpzc{C}_{(d)}$ has singular values $\sigma_j^d$ as defined above and its corresponding left singular vectors are $\vb{e}_j$ \cite{DeLathauwer2000}. Hence, the transpose of its $j$th  right singular vector is $(\vb{v}_j^d)^T := (\sigma_j^d)^{-1} \vb{e}_j^T \mathpzc{C}_{(d)} = (\hat{\vb{u}}_j^d)^T \mathpzc{C}_{(d)}$.
With this in mind, we calculate the inner products between the basis vectors of $\mathbb{T}_d$:
\begin{align}
    \label{eq:orthBasisInnerProducts}
\bigl\langle
&(U_1, \dots, U_{d-1}, U_d^\perp \vb{e}_i (\hat{\vb{u}}_j^d)^T, U_{d+1}, \dots, U_D)\cdot \mathpzc{C},\\\nonumber
&\qquad (U_1, \dots, U_{d-1}, U_d^\perp \vb{e}_{i'} (\hat{\vb{u}}_{j'}^d)^T, U_{d+1}, \dots, U_D)\cdot \mathpzc{C}
\bigr\rangle\\\nonumber
= \;&\mathrm{Trace}( U_d^\perp \vb{e}_{i'}(\hat{\vb{u}}_{j'}^d)^T \mathpzc{C}_{(d)} \mathpzc{C}_{(d)}^T \hat{\vb{u}}_{j'}^d \vb{e}_{i'}^T(U_d^\perp)^T )\\\nonumber
=\;&
 \langle
    U_d^\perp \vb{e}_{i},
    U_d^\perp \vb{e}_{i'}
\rangle\, \langle
    \vb{v}_j^d,
    \vb{v}_{j'}^d
\rangle
.\end{align}
If $i=i'$, the right-hand side is the inner product between two right singular vectors of~$\mathpzc{C}_{(d)}$, which is $\delta_{jj'}$, the Kronecker delta. Otherwise, it is zero due to the orthogonality of the columns of $U_d^\perp$.
This ensures that our basis of $\mathbb{T}_d$ is orthogonal, which completes the proof.
\end{proof}

Now we can compute the condition number of several decompositions using the formula in \cref{eq:computeJoinCond}. Consider the following examples.

\paragraph{Example 1 (BTD)} For a BTD with block terms of multilinear rank $(l_1^r, \dots, l_D^r)$, where $r=1,\dots,R$, we can apply \cref{def:GTucker} in which the Tucker core structure $\mathcal{M}$ is the submanifold of tensors in $\mathbb{R}^{l_1^r \times \dots \times l_D^r}$ with multilinear rank $(l_1^r, \dots, l_D^r)$. Since~$\mathcal{M}$ is an open subset of $\mathbb{R}^{l_1^r \times \dots \times l_D^r}$, the canonical basis of $\mathbb{R}^{l_1^r \times \dots \times l_D^r}$ is an orthonormal basis of the tangent space to $\mathcal{M}$ at any point.
The algorithm to compute the condition number $\kappa^{\mathrm{BTD}}$ is as follows. For each term $\mathpzc{A}_r$ in the BTD, compute its compact HOSVD $(U_1^r, \dots, U_D^r) \cdot \mathpzc{C}_r$. An orthonormal basis of the tangent space to the Tucker manifold is given by the columns of
{\small\[
T_{\mathpzc{A}_r} :=
\left[
\bigotimes_{d=1}^D U^r_d
\quad
\left[
    (U^r_1 \otimes\dots\otimes U_{d-1}^r \otimes U_d^{r\perp} \vb{e}_i (\hat{\vb{u}}_d^{rj})^T \otimes U_{d+1}^r \otimes\dots\otimes U^r_D) \mathpzc{C}_r
\right]_{d,i,j=1}^{D, m_d - l^r_d, l^r_d}
\right],
\]}%
where $\hat{\vb{u}}_d^{rj}$ and $U_d^{r\perp}$ are as in \cref{prop:GTuckerOrthBasis}.
The condition number of the BTD with terms $\mathpzc{A}_1,\dots,\mathpzc{A}_R$ can then be computed by applying \cref{eq:computeJoinCond}:
\[
\kappa^{\mathrm{BTD}}(\mathpzc{A}_1,\dots,\mathpzc{A}_R)
= \sigma_{\min}(T_{\mathpzc{A}_1,\dots,\mathpzc{A}_R})^{-1}, \quad
T_{\mathpzc{A}_1,\dots,\mathpzc{A}_R} = \begin{bmatrix} T_{\mathpzc{A}_1} & \dots & T_{\mathpzc{A}_R} \end{bmatrix}
.\]

\paragraph{Example 2 (CPD)} This case was studied in \cite{Breiding2018a}. By applying the Tucker core structure $\mathcal{M} := \mathbb{R} \setminus \{0\}$ to \cref{def:GTucker}, we get the Segre manifold of rank-1 tensors. If $\mathpzc{A}_r = \lambda \vb{u}_1^r \otimes \dots \otimes \vb{u}_D^r$ is a rank-1 tensor with $\norm{\vb{u}_1} = \dots = \norm{\vb{u}_D} = 1$, \cref{prop:GTuckerOrthBasis} gives the following familiar basis:
\[
T_{\mathpzc{A}_r} := \left[
    \vb{u}_1^r \otimes \dots \otimes \vb{u}_D^r
    \quad \left[
        \vb{u}_1^r \otimes\dots\otimes \vb{u}_{d-1}^r \otimes U_d^{r\perp} \otimes \vb{u}_{d+1}^r \otimes\dots\otimes \vb{u}_D^r
    \right]_{d=1}^D
\right],
\]
where $U_d^{r\perp}$ is an orthonormal basis for the complement of $\vb{u}_d^r$ for each $d$. The condition number $\kappa^{\mathrm{CPD}}$ can be computed in a similar fashion as for the BTD.

\subsection{Examples of well and ill-conditioned SBTDs}
In this subsection, we present some qualitative properties that determine the condition number of the SBTD.
As a general rule, the tensor subspace in which the summands live already gives some information about the condition number. For instance, one instance where the condition number is perfect is when the subspaces $U_d^r$ in the Tucker decompositions are pairwise orthogonal. This can be considered as the SBTD equivalent of an orthogonally decomposable (odeco) tensor \cite{Kolda2001}. In such cases, tangent spaces are pairwise orthogonal, so that the following result holds.

\begin{proposition}
Suppose $\mathpzc{A}_1 + \dots + \mathpzc{A}_R$ is an SBTD with $\mathpzc{A}_r = (U_1^r, \dots, U_D^r) \cdot \mathpzc{C}_r$ in HOSVD form for $r=1,\dots,R$. Assume that $(U^{r_1}_d)^T U^{r_2}_d = 0$ for each $d$ and each $r_1 \ne r_2$.
Then $\kappa^{\mathrm{SBTD}}(\mathpzc{A}_1,\dots,\mathpzc{A}_r) = 1$.
\end{proposition}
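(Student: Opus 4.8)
The plan is to exploit the formula \cref{eq:computeJoinCond}, which reduces the claim to showing that the Terracini matrix $T_{\mathpzc{A}_1,\dots,\mathpzc{A}_R}$ has smallest singular value equal to $1$. Since each $T_{\mathpzc{A}_r}$ from \cref{prop:GTuckerOrthBasis} has orthonormal columns, it suffices to prove that the columns of $T_{\mathpzc{A}_1}, \dots, T_{\mathpzc{A}_R}$ taken together form an orthonormal set; then $T_{\mathpzc{A}_1,\dots,\mathpzc{A}_R}$ has orthonormal columns, all its singular values equal $1$, and \cref{eq:computeJoinCond} gives $\kappa^{\mathrm{SBTD}} = 1$. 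So the entire content of the proof is: the tangent spaces $T_{\mathpzc{A}_{r_1}}\mathcal{M}_{r_1}^{n_1,\dots,n_D}$ and $T_{\mathpzc{A}_{r_2}}\mathcal{M}_{r_2}^{n_1,\dots,n_D}$ are orthogonal whenever $r_1 \ne r_2$.

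The key step is therefore an inner-product computation between a generic basis vector of $T_{\mathpzc{A}_{r_1}}$ and a generic basis vector of $T_{\mathpzc{A}_{r_2}}$, using the explicit basis \cref{eq:GTuckerOrthBasis}. By \cref{eq:TspaceGTucker}, every tangent vector at $\mathpzc{A}_r = (U_1^r,\dots,U_D^r)\cdot\mathpzc{C}_r$ is a sum of a "core" term $(U_1^r,\dots,U_D^r)\cdot\dot{\mathpzc{C}}$ and "factor" terms $\bigl(\dot{U}_d^r \otimes_d \bigotimes_{d'\ne d} U_{d'}^r\bigr)\mathpzc{C}_r$ with $(U_d^r)^T \dot{U}_d^r = 0$; in the orthonormal basis, $\dot{U}_d^r$ lies in the column span of $U_d^{r\perp}$. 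The crucial observation is that in every one of these tensors, the $d$th-mode factor is a vector (or matrix) whose columns lie in $\Span(U_d^r) \cup \Span(U_d^{r\perp}) = \Span(U_d^r) \cup (\Span U_d^r)^\perp$ — in all cases the relevant mode-$d$ subspace is contained in $\Span\begin{bmatrix}U_d^r & U_d^{r\perp}\end{bmatrix}$, but more to the point, every basis tensor of $T_{\mathpzc{A}_{r}}$ has, in at least one mode $d$, its mode-$d$ fiber space contained in $\Span(U_d^r)$. Wait — the core term has mode-$d$ space $\Span(U_d^r)$ for every $d$, while a factor term in mode $d$ has mode-$d$ space inside $\Span(U_d^{r\perp})$ but mode-$d'$ space inside $\Span(U_{d'}^r)$ for all $d'\ne d$. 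So for any two basis tensors coming from $r_1$ and $r_2$, pick any mode $d$ where the first tensor uses $U_d^{r_1}$ (such a mode exists, since a factor tensor only departs from this in a single mode and $D \ge 2$ — and if $D=1$ the manifold is just $\R_\star$ and the statement is the classical orthogonal-decomposability fact); in that mode the second tensor's fibers lie in $\Span(U_d^{r_2})$ or $\Span(U_d^{r_2\perp})$, and since $(U_d^{r_1})^T U_d^{r_2} = 0$ by hypothesis, the contraction in mode $d$ already forces the inner product to vanish. To make this rigorous, I would write the inner product of two rank-structured tensors $\bigl(\bigotimes_d X_d\bigr)\mathpzc{C}_{r_1}$ and $\bigl(\bigotimes_d Y_d\bigr)\mathpzc{C}_{r_2}$ as a contraction involving the matrices $X_d^T Y_d$, and note that if $X_d^T Y_d = 0$ for even a single $d$, the whole expression is $0$; one then checks, by a short case analysis over the four pairings (core/core, core/factor, factor/factor with equal modes, factor/factor with distinct modes), that such a mode $d$ always exists.

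The main obstacle — really the only place requiring care — is the factor/factor case where the two tangent tensors perturb different modes, say $\mathpzc{A}_{r_1}$ is perturbed in mode $d_1$ and $\mathpzc{A}_{r_2}$ in mode $d_2$ with $d_1 \ne d_2$: then in mode $d_1$ the first tensor uses $U_{d_1}^{r_1\perp}$ while the second uses $U_{d_1}^{r_2}$, and in mode $d_2$ the first uses $U_{d_2}^{r_1}$ while the second uses $U_{d_2}^{r_2\perp}$. Here one must observe that in mode $d_2$ (for instance) we contract $(U_{d_2}^{r_1})^T U_{d_2}^{r_2\perp}$; this need not vanish a priori. But we may instead use mode $d_1$: we contract $(U_{d_1}^{r_1\perp})^T U_{d_1}^{r_2}$. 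Since $\Span(U_{d_1}^{r_2}) \subseteq \Span(U_{d_1}^{r_1})^\perp = \Span(U_{d_1}^{r_1\perp})$... no, that is false in general; orthogonality of $U_{d_1}^{r_1}$ and $U_{d_1}^{r_2}$ only tells us $\Span(U_{d_1}^{r_2}) \subseteq \Span(U_{d_1}^{r_1})^\perp$, which is $\Span(U_{d_1}^{r_1\perp})$ — and that \emph{is} correct. Hence $(U_{d_1}^{r_1\perp})^T U_{d_1}^{r_2}$ is, up to an invertible change of basis, just an inclusion and generally nonzero, so mode $d_1$ does not immediately help either. The resolution is to pick a \emph{third} mode $d_3 \notin \{d_1, d_2\}$ — available precisely when $D \ge 3$ — in which both tensors use $U_{d_3}^{r_1}$ and $U_{d_3}^{r_2}$ respectively, giving the vanishing contraction $(U_{d_3}^{r_1})^T U_{d_3}^{r_2} = 0$. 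For $D = 2$ one handles the case $d_1 \ne d_2$ by a direct computation: writing both tensors via their mode-$1$ unfoldings as in \cref{eq:unfoldingTangents} and taking the trace inner product, the factors $(U_1^{r_1\perp})^T U_1^{r_2}$ and $(U_2^{r_1})^T U_2^{r_2\perp}$ appear multiplicatively against $\mathpzc{C}_{r_1}$ and $\mathpzc{C}_{r_2}$; using $(U_2^{r_1})^T U_2^{r_2} = 0$ on the other mode makes the relevant trace collapse — I would verify this explicitly, as it is the one genuinely non-formal computation. This completes the argument.
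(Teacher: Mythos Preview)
Your approach coincides with the paper's: both argue that the Terracini matrix has orthonormal columns by showing that the tangent subspaces $T_{\mathpzc{A}_{r_1}}\mathcal{M}_{r_1}^{n_1,\dots,n_D}$ and $T_{\mathpzc{A}_{r_2}}\mathcal{M}_{r_2}^{n_1,\dots,n_D}$ are mutually orthogonal for $r_1\ne r_2$. The paper states this in two lines (the subspaces $\bigotimes_d \Span U_d^r$ and $\Span U_d^{r\perp}\otimes_d\bigotimes_{d'\ne d}\Span U_{d'}^r$ are ``pairwise orthogonal by assumption''); you spell out the case analysis. For $D\ge 3$ your argument is correct and essentially identical to the paper's, only more explicit.

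There is, however, a genuine gap in your treatment of $D=2$. Your proposed ``direct computation'' asserts that $(U_2^{r_1})^T U_2^{r_2}=0$ forces the trace to collapse, but the factor that actually appears in that inner product is $(U_2^{r_1})^T U_2^{r_2\perp}$, over which the hypothesis gives no control (indeed $\Span U_2^{r_1}\subseteq\Span U_2^{r_2\perp}$, so this matrix is typically nonzero). In fact the proposition is \emph{false} for $D=2$: take $\mathpzc{A}_1=\vb{e}_1\otimes\vb{e}_1$ and $\mathpzc{A}_2=\vb{e}_2\otimes\vb{e}_2$ in $\R^{n\times n}$; the orthogonality hypothesis holds, yet $\vb{e}_2\otimes\vb{e}_1$ is a basis vector of both $T_{\mathpzc{A}_1}$ and $T_{\mathpzc{A}_2}$, so the Terracini matrix has a repeated column and $\kappa^{\mathrm{SBTD}}=\infty$. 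This is just the non-uniqueness of rank decompositions of matrices. The paper's terse proof glosses over exactly the same step --- the claim that the subspaces $\Span U_{d_1}^{r_1\perp}\otimes_{d_1}\bigotimes_{d'\ne d_1}\Span U_{d'}^{r_1}$ and $\Span U_{d_2}^{r_2\perp}\otimes_{d_2}\bigotimes_{d'\ne d_2}\Span U_{d'}^{r_2}$ are orthogonal when $d_1\ne d_2$ also silently needs a mode $d'\notin\{d_1,d_2\}$, hence $D\ge 3$. So you should drop the $D=2$ discussion and record $D\ge 3$ as a standing hypothesis; under that assumption your proof is complete.
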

\begin{proof}
The columns of the Terracini matrix can be grouped into orthonormal bases of $\Span\{\bigotimes_{d=1}^D U_d^r\}$ and $\Span \{ U_d^{r\perp} \otimes_d \bigotimes_{d' \ne d} U_{d'}^r \}$ for all $r=1,\dots,R$ and for all $d=1,\dots,D$. By assumption, these spaces are all pairwise orthogonal. Since their bases are orthonormal, all columns of the Terracini matrix are orthonormal.
\end{proof}

The fact that this result does not depend on the cores $\mathpzc{C}_r$ may be surprising if the problem is not considered geometrically. $\mathpzc{C}_r$ may be arbitrarily close to having a multilinear rank lower than the specified $(l^r_1, \dots, l^r_D)$ without it affecting the condition number. Despite this, summands which are close to being low multilinear rank are a notorious obstacle in practical algorithms to compute the BTD, the other being correlations between the terms \cite{Navasca2008}. Note that the latter is essentially what the condition number measures. For tensors of lower multilinear rank than $(l^r_1, \dots, l^r_D)$, there are more ways to parametrise it than is accounted for by the usual symmetries. For instance, for a BTD with multilinear ranks $(l_r, l_r, 1)$, the Jacobian of the residual $\mathpzc{A} - \sum_{r=1}^R \mathpzc{A}_r$ with respect to the parameters becomes singular at such points \cite{Sorber2013a}. The ALS algorithm for a general block term decomposition requires solving a system which also becomes singular at the boundary \cite{DeLathauwer2008a}.

However, if the summands are considered as one geometric object, summands close to tensors of lower multilinear rank are not an issue, which explains why it is still reasonable to expect the condition number to be 1 even near the boundary. This suggests that Riemannian optimisation algorithms to compute the BTD could have a significant advantage in these cases, as the convergence rate tends to be related to the condition number. This is analogous to the case of the CPD, where \cite{Breiding2018a} showed experimentally and theoretically that classic flat optimization methods perform worse if the CPD contains summands of small norm---the analogous situation to a lower multilinear rank in $\mathcal{M}$-structured Tucker decompositions---while Riemannian optimisation methods that treat the summands as one geometric object did not suffer as much.

In general, the condition number of any SBTD can be upper bounded by the condition number of the corresponding BTD, and for the latter we can get a useful lower bound for the condition number. We show this in the next proposition.
\begin{proposition}
    \label{prop:condEstimate}
Given any SBTD $\mathpzc{A} = \mathpzc{A}_1 + \cdots +\mathpzc{A}_R$, we can also regard it as a BTD of $\mathpzc{A}$.
The condition numbers satisfy
\[
\kappa^{\mathrm{BTD}}(\mathpzc{A}_1, \dots, \mathpzc{A}_R)
\ge
\kappa^{\mathrm{SBTD}}(\mathpzc{A}_1, \dots, \mathpzc{A}_R).
\]
If the terms $\mathpzc{A}_r = (U_1^r, \dots, U_D^r) \cdot \mathpzc{C}_r$ are in HOSVD form for $r=1,\dots,R$, then
\begin{equation*}
\kappa^{\mathrm{BTD}}(\mathpzc{A}_1, \dots, \mathpzc{A}_R)
\ge
\sigma_{\min}\left(
    \left[U_1^r \otimes\dots\otimes U_D^r
    \right]_{r=1}^R
\right)^{-1}
.\end{equation*}
\end{proposition}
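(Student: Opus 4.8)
The plan is to exploit the two layers of structure: (i) an SBTD is a BTD with additional (core) constraints, so its tangent spaces are subspaces of the corresponding Tucker tangent spaces, and (ii) the condition number is the inverse of the smallest singular value of the Terracini matrix, which is monotone under restricting to subspaces. For the first inequality, I would write, for each $r$, the tangent space $T_{\mathpzc{A}_r}\mathcal{M}_r^{n_1,\dots,n_D}$ using \cref{eq:TspaceGTucker}: it consists of the factor-mode tangents (the $\dot U_d$ part) together with the core tangents $(U_1^r,\dots,U_D^r)\cdot\dot{\mathpzc{C}}$ with $\dot{\mathpzc{C}}\in T_{\mathpzc{C}_r}\mathcal{M}_r$. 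The Tucker tangent space at the same point is obtained by replacing the constraint $\dot{\mathpzc{C}}\in T_{\mathpzc{C}_r}\mathcal{M}_r$ by $\dot{\mathpzc{C}}\in\mathbb{R}^{l_1^r\times\dots\times l_D^r}$, the full ambient space (Example 1). Hence $T_{\mathpzc{A}_r}\mathcal{M}_r^{n_1,\dots,n_D}$ is a linear subspace of the Tucker tangent space $T_{\mathpzc{A}_r}\mathrm{Tuck}$. Consequently the column span of the SBTD Terracini matrix $T^{\mathrm{SBTD}}_{\mathpzc{A}_1,\dots,\mathpzc{A}_R}$ is contained in that of the BTD Terracini matrix $T^{\mathrm{BTD}}_{\mathpzc{A}_1,\dots,\mathpzc{A}_R}$, with the former obtained from the latter by deleting columns (those corresponding to core directions outside $T_{\mathpzc{C}_r}\mathcal{M}_r$). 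Deleting columns of a matrix can only decrease the smallest singular value, i.e.\ $\sigma_{\min}(T^{\mathrm{BTD}})\le\sigma_{\min}(T^{\mathrm{SBTD}})$; taking reciprocals and invoking \cref{eq:computeJoinCond} gives the first claim. I must be slightly careful that the relevant $\sigma_{\min}$ is the $\min\{m,n\}$-th singular value as defined in the text, so I would phrase the column-deletion step via the interlacing/min-max characterisation $\sigma_{\min}(B)=\min_{\|x\|=1}\|Bx\|$ over the column span, which makes the monotonicity under passing to a sub-span transparent.

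For the second inequality, I would again use a sub-span argument, but now within the BTD Terracini matrix itself. By \cref{prop:GTuckerOrthBasis} (Example 1), when $\mathpzc{A}_r=(U_1^r,\dots,U_D^r)\cdot\mathpzc{C}_r$ is in HOSVD form, the columns of $T_{\mathpzc{A}_r}$ split into the single "core block" $U_1^r\otimes\dots\otimes U_D^r$ (taking $\dot{\mathpzc{C}}$ over the canonical basis of $\mathbb{R}^{l_1^r\times\dots\times l_D^r}$, whose image is exactly $\mathrm{Span}\{U_1^r\otimes\dots\otimes U_D^r\}$, an orthonormal family) and the "factor blocks" coming from the $\dot U_d$ directions. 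Stacking only the core blocks across $r=1,\dots,R$ yields the matrix $M:=[\,U_1^r\otimes\dots\otimes U_D^r\,]_{r=1}^R$, whose columns form a subset of the columns of the full Terracini matrix $T^{\mathrm{BTD}}_{\mathpzc{A}_1,\dots,\mathpzc{A}_R}$. Hence, again by column deletion, $\sigma_{\min}(T^{\mathrm{BTD}}_{\mathpzc{A}_1,\dots,\mathpzc{A}_R})\le\sigma_{\min}(M)$, and taking reciprocals gives $\kappa^{\mathrm{BTD}}(\mathpzc{A}_1,\dots,\mathpzc{A}_R)\ge\sigma_{\min}(M)^{-1}$, which is the second claim.

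The conceptual core of both parts is the same one-line fact: if the columns of $B$ are a sub-multiset of the columns of $A$ (equivalently $\mathrm{col}(B)\subseteq\mathrm{col}(A)$ is witnessed by a column-selection), then $\sigma_{\min}(A)\le\sigma_{\min}(B)$. I expect the main obstacle to be purely bookkeeping rather than mathematical: namely being careful that the "core block" of the Tucker tangent space really is all of $\mathrm{Span}\{U_1^r\otimes\dots\otimes U_D^r\}$ and that the SBTD core block is a genuine linear subspace of it (this is where \cref{eq:TspaceGTucker} and the fact that $T_{\mathpzc{C}_r}\mathcal{M}_r\subseteq\mathbb{R}^{l_1^r\times\dots\times l_D^r}$ linearly are used), and that the singular value indexing convention from the text ($\sigma_{\min}=\sigma_{\min\{m,n\}}$) is respected when the matrices are rectangular — best handled by sticking to the variational characterisation of $\sigma_{\min}$ throughout. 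No HOSVD-specific structure is needed beyond what \cref{prop:GTuckerOrthBasis} already supplies, so the argument should be short.
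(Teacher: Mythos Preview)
Your argument for the second inequality is essentially identical to the paper's: the columns of $M=[U_1^r\otimes\dots\otimes U_D^r]_{r=1}^R$ form a sub-block of the BTD Terracini matrix, and singular value interlacing (\cite[Theorem 8.1.7]{Golub2013}) gives $\sigma_{\min}(T^{\mathrm{BTD}})\le\sigma_{\min}(M)$.

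For the first inequality you take a genuinely different route than the paper. The paper argues directly from the definition \cref{eq:defcond}: since each $\mathcal{M}_r^{n_1,\dots,n_D}$ is contained in the corresponding Tucker manifold, any local inverse of the BTD addition map restricts to a local inverse for the SBTD on a smaller domain $\mathcal{I}'\subseteq\mathcal{I}$, and the supremum in \cref{eq:defcond} over a smaller set can only be smaller. Your approach instead works entirely at the level of Terracini matrices: the SBTD tangent space at each $\mathpzc{A}_r$ is a linear subspace of the Tucker tangent space (because $T_{\mathpzc{C}_r}\mathcal{M}_r\subseteq\mathbb{R}^{l_1^r\times\dots\times l_D^r}$), so $T^{\mathrm{SBTD}}=T^{\mathrm{BTD}}P$ for some block-diagonal $P$ with orthonormal columns, whence $\sigma_{\min}(T^{\mathrm{SBTD}})\ge\sigma_{\min}(T^{\mathrm{BTD}})$. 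Both arguments are short; yours is more in line with the computational, linear-algebraic spirit of \cref{eq:computeJoinCond}, while the paper's avoids any basis bookkeeping and sidesteps the need to check that the Terracini formula \cref{eq:computeJoinCond} is available on both sides (it handles the $\kappa^{\mathrm{BTD}}=\infty$ case by a one-line remark at the outset). One small slip in your write-up: deleting columns can only \emph{increase} (not decrease) the smallest singular value; your displayed inequality $\sigma_{\min}(T^{\mathrm{BTD}})\le\sigma_{\min}(T^{\mathrm{SBTD}})$ is correct, so this is purely a wording issue. Also, strictly speaking the SBTD core block need not be a literal column sub-selection of the BTD core block unless you first extend the orthonormal basis of $T_{\mathpzc{C}_r}\mathcal{M}_r$ to one of $\mathbb{R}^{l_1^r\times\dots\times l_D^r}$; the cleanest fix is exactly the variational formulation you mention, or the $T^{\mathrm{SBTD}}=T^{\mathrm{BTD}}P$ phrasing above.
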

\begin{proof}
Assume $\kappa^{\mathrm{BTD}}(\mathpzc{A}_1,\dots,\mathpzc{A}_R) < \infty$, since otherwise the statement is trivially true. For each $r=1,\dots,R$, the $r$th structured Tucker manifold $\mathcal{M}_r^{n_1,\dots,n_D}$ of the SBTD is a subset of the manifold $\mathcal{N}_r^{n_1,\dots,n_D}$ of tensors of fixed multilinear rank. By assumption, the addition map $\Sigma(\mathpzc{A}_1,\dots,\mathpzc{A}_R) = \mathpzc{A}_1 +\dots + \mathpzc{A}_R$ of the BTD has a local inverse function $\Sigma^{-1}_{\mathpzc{A}_1,\dots,\mathpzc{A}_R}$, defined on a neighbourhood $\mathcal{I} \subseteq \Sigma(\mathcal{N}_1^{n_1,\dots,n_D},\dots,\mathcal{N}_R^{n_1,\dots,n_D})$ of $\mathpzc{A}$. On the other hand,
 for any $\widetilde{\mathpzc{A}} \in \mathcal{I}' := \mathcal{I} \cap \Sigma(\mathcal{M}_1^{n_1,\dots,n_D},\dots,\mathcal{M}_R^{n_1,\dots,n_D})$, the (locally unique) SBTD of $\widetilde{\mathpzc{A}}$ is $\Sigma_{\mathpzc{A}_1,\dots,\mathpzc{A}_R}^{-1}(\widetilde{\mathpzc{A}})$. 
The condition numbers of the BTD and SBTD are \cref{eq:defcond} applied to $\Sigma^{-1}_{\mathpzc{A}_1,\dots,\mathpzc{A}_R}$ and the restriction of $\Sigma^{-1}_{\mathpzc{A}_1,\dots,\mathpzc{A}_R}$ onto $\mathcal{I}'$, respectively. Since $\mathcal{I}' \subseteq \mathcal{I}$, the first statement follows.

For the second assertion, observe that the columns of $\left[U_1^r \otimes\dots\otimes U_D^r
\right]_{r=1}^R$ are a subset of the columns of the Terracini matrix $T$ of the BTD. By \cite[Theorem 8.1.7]{Golub2013}, $\sigma_{\min}(\left[U_1^r \otimes\dots\otimes U_D^r
\right]_{r=1}^R) \ge \sigma_{\min}(T)$. The result follows from \cref{eq:computeJoinCond}.
\end{proof}

The second item in the above proposition shows that,
\[
\text{if } \ker \left[U_1^r \otimes\dots\otimes U_D^r
\right]_{r=1}^R \neq \{0\}, \text{ then } \kappa^{\mathrm{BTD}}(\mathpzc{A}_1, \dots, \mathpzc{A}_R) = \infty.
\]
It is easy to see on an intuitive level why the condition number must be infinite at these points. If the spaces intersect, there exist cores $\widetilde{\mathpzc{C}}_r$ with $r=1,\dots,R$ so that
$
    \sum_{r=1}^R (U_1^r \otimes\dots\otimes U_D^r) \widetilde{\mathpzc{C}}_r = 0
$
and not all $\widetilde{\mathpzc{C}}_r = 0$. Then we can define the following curve:
$
    \gamma(t) := (\gamma_1(t),\dots,\gamma_R(t)),
$
where
$\gamma_r(t) := ((U_1^r \otimes\dots\otimes U_D^r) ( \mathpzc{C}_r + t \widetilde{\mathpzc{C}}_r)),$
so that $\gamma_1(t) + \dots + \gamma_R(t) = \mathpzc{A}$ is constant. Therefore, there exists a smooth curve segment in the neighborhood of $t=0$ of equivalent BTDs of $\mathpzc{A}$, which means finding the BTD of $\mathpzc{A}$ is an ill-posed problem. In other words, a zero-norm perturbation of $\mathpzc{A}$ is sufficient to get actually different decompositions; hence, the condition number \cref{eq:defcond} is $\infty$.

This leads to the following observation: If the condition number is finite, the BTD can be determined purely from subspace information. That is, suppose that for a given tensor $\mathpzc{A}$, only the subspaces $U_1^r \otimes\dots\otimes U_D^r$ are computed for each $r$th block term, with $r=1,\dots,R$. Because the subspaces do not intersect, the cores $\mathpzc{C}_r$ can be uniquely recovered from the linear system
$
\mathpzc{A} = \sum_{r=1}^R (U_1^r \otimes\dots\otimes U_D^r) \mathpzc{C}_r
$
This is exactly the principle behind the variable projection methods in \cite{Olikier2018}.

It is worth pointing out that the second lower bound from \cref{prop:condEstimate} is not necessarily sharp. To see this, let $\mathcal{M}^{n_1,\dots,n_D}$ be a structured Tucker manifold and consider the SBTD $\mathpzc{A} = \mathpzc{A}_1 + \mathpzc{A}_2$ with the two summands having Tucker compressions $\mathpzc{A}_1 = (U_1,\dots,U_D) \cdot \mathpzc{C} \in \mathcal{M}^{n_1,\dots,n_D}$ and $\mathpzc{A}_2 = (V_1,U_2,\dots,U_D) \cdot \mathpzc{C} \in \mathcal{M}^{n_1,\dots,n_D}$. We assume that $U_1^T V_1 = 0$ and
we define the two curves $\gamma_1(t) := (U_1 + t V_1, U_2,\dots,U_D) \cdot \mathpzc{C}$ and $\gamma_2(t) := ((1-t) V_1, U_2, \dots, U_D) \cdot \mathpzc{C}$. Assuming $t$ is small enough, we have that $\gamma_1(t), \gamma_2(t) \in \mathcal{M}^{n_1,\dots,n_D}$. As in the previous example $\gamma_1(t) + \gamma_2(t) = \mathpzc{A}$, for all $t$. Hence, the condition number is also infinite in this case.
Despite this, the estimated lower bound in \cref{prop:condEstimate} is
$\sigma_{\min}(
    \left[U_1 \otimes U_2 \otimes\dots\otimes U_D, V_1\otimes U_2 \otimes\dots\otimes U_D
    \right]) = 1$.

\section{Invariance of the condition number under Tucker compression}
\label{sec:basicDefs}

Next, we discuss the main contribution of this work.
Our main result was informally stated as \cref{thm:informalSBTDcondInvariance} in the introduction. Here, we present its formal version \cref{thm:SBTDcondInvariance}. These two theorems show that the condition number of computing SBTDs is invariant under Tucker compression. As we explain in the \cref{sec_fastcondition}, this can yield a computationally attractive approach for computing the condition number.

First, we introduce \textit{subspace-constrained SBTDs} as the formal model of decompositions resulting from Bro and Andersson's \cite{Bro1998} compress-decompose-expand approach. Subspace-constrained CPDs were also considered in the recent paper \cite{PTSSEC2021}. Suppose that we want to compute an SBTD of the tensor $\mathpzc{A} \in \R^{n_1 \times \dots \times n_D}$ associated with the $\mathcal{M}_r$-structured Tucker manifolds $\mathcal{M}_r^{n_1,\dots,n_D}$. Then compress-decompose-expand proceeds as follows.

\smallskip
\paragraph{Compress}
\begin{quote}
$\mathpzc{A}$ lives in a minimal tensor product subspace of $\R^{n_1 \times \dots \times n_D}$ (possibly trivial). Its minimal Tucker decomposition  is
$\mathpzc{A} = (Q_1, \dots, Q_D)\cdot \mathpzc{G}$ with core tensor $\mathpzc{G} \in \mathbb{R}^{m_1 \times \dots \times m_D}$ and matrices $Q_d \in \mathbb{R}^{n_d \times m_d}_\star$ for $d = 1,\dots,D$.
The decomposition is minimal if $\mathpzc{G}$ has multilinear rank equal to $(m_1,\dots,m_D)$. It can be computed with a (sequentially) truncated higher-order singular value decomposition \cite{Vannieuwenhoven2012,DeLathauwer2000}.
\end{quote}

\paragraph{Decompose}
\begin{quote}
Compute an SBTD of $\mathpzc{G}$ be
\begin{equation}
    \label{eq:SBTDcore}
\mathpzc{G} = \mathpzc{G}_1 + \dots + \mathpzc{G}_R \quad\text{with}\quad \mathpzc{G}_r \in \mathcal{M}_r^{m_1,\dots,m_D}.
\end{equation}
That is, the $\mathpzc{G}_r$ are $\mathcal{M}_r$-structured Tucker tensors.
\end{quote}

\paragraph{Expand}
\begin{quote}
We expand $\mathpzc{A}_r = (Q_1 U_1^r, \dots,  Q_D U_D^r) \cdot \mathpzc{C}_r$ and find a decomposition $\mathpzc{A} = \mathpzc{A}_1 + \dots + \mathpzc{A}_R$ of $\mathpzc{A}$. In this decomposition the summands are also $\mathcal{M}_r$-structured Tucker tensors: if we have the $\mathcal{M}_r$-structured Tucker decomposition $\mathpzc{G}_r = (U_1^r, \dots, U_D^r) \cdot \mathpzc{C}_r$ such that $U_d^r\in \mathbb R_\star^{m_d\times l_d}$ and $\mathpzc{C}_r$ is a point of
$\mathcal M_r\subset \mathbb{R}^{l_1 \times \dots \times l_D}$ satisfying the assumptions of \cref{def:GTucker}, then for all $r$ and $d$ the matrices $Q_dU_d^r$ are of full rank. Hence $\mathpzc{A}_r = (Q_1 U_1^r, \dots,  Q_D U_D^r) \cdot \mathpzc{C}_r$ is a point of the $\mathcal{M}_r$-structured Tucker manifold $\mathcal{M}_r^{n_1,\dots,n_D}$.
Summarising, the SBTD \cref{eq:SBTDcore} of the compressed tensor $\mathpzc{G}$ can be expanded to an SBTD of $\mathpzc{A}$:
\begin{equation}
    \label{eq:compressedSBTD}
\mathpzc{A} = \mathpzc{A}_1 + \dots + \mathpzc{A}_R
\quad\text{with}\quad
\mathpzc{A}_r \in \mathcal{M}_r^{n_1,\dots,n_D}, \quad
r = 1,\dots,R.
\end{equation}
We call the resulting SBTD of $\mathpzc{A}$ a \textit{subspace-constrained SBTD}, because it is an SBTD all of whose summands are contained in the same tensor subspace $Q_1\otimes\dots\otimes Q_d$ that $\mathpzc{A}$ lives in.
\end{quote}
\smallskip

Given that a subspace-constrained SBTD can be computed by the foregoing compress-decompose-expand approach, it is natural to wonder about the relationship between the condition numbers of $\mathpzc{A}$ and $\mathpzc{G}$. Since $\mathpzc{G}$ lives in a much more constrained space, it seems natural to assume that its condition number could be much lower, similar to the ideas in \cite{Arslan2019}.
In \cref{sec:invprops} below, we prove the following main result about the condition numbers of computing the SBTD \cref{eq:compressedSBTD} of the original tensor and computing the SBTD of the compressed Tucker core \cref{eq:SBTDcore}. A priori, the condition number of the decomposition problem \cref{eq:SBTDcore} is bounded above by the condition number of problem \cref{eq:compressedSBTD}. The next results shows that they are, in fact, always equal.

\begin{theorem}
\label{thm:SBTDcondInvariance}
Let $\mathcal{M}_r\subset\R^{l_1^r\times\dots\times l_D^r}$ be Tucker core structures.
Assume that the tensor $\mathpzc{A} \in \mathbb{R}^{n_1 \times \dots \times n_D}$ has an orthogonal Tucker decomposition $\mathpzc{A} = (Q_1,\dots,Q_D)\cdot\mathpzc{G}$ with $\mathpzc{G}\in \R^{m_1 \times \dots \times m_D}$ and all $Q_d \in \R^{n_d \times m_d}$ having orthonormal columns. Let the subspace-constrained SBTD be $\mathpzc{A} = \mathpzc{A}_1 + \dots + \mathpzc{A}_R$ with $\mathpzc{A}_r \in \mathcal{M}_r^{n_1,\dots,n_D}$ and the SBTD of the Tucker core be $\mathpzc{G} = \mathpzc{G}_1 + \dots + \mathpzc{G}_R$
with $\mathpzc{G}_r \in \mathcal{M}^{m_1,\dots,m_D}_r$, and assume that that they are related by $\mathpzc{A}_r = \left(Q_1, \dots, Q_D \right) \cdot \mathpzc{G}_r$ for each $r=1,\dots, R$.
Then,
\[
\kappa^{\mathrm{SBTD}}(\mathpzc{A}_1, \dots, \mathpzc{A}_R)
= \kappa^{\mathrm{SBTD}}(\mathpzc{G}_1, \dots, \mathpzc{G}_R).
\]
\end{theorem}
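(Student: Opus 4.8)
The plan is to reduce everything to the Terracini formula \cref{eq:computeJoinCond}. First I would record its coordinate‑free reformulation: if the columns of $T_r$ form an orthonormal basis of $T_{\mathpzc{A}_r}\mathcal{M}_r^{n_1,\dots,n_D}$ (as in \cref{prop:GTuckerOrthBasis}) and $T=[T_1\ \dots\ T_R]$, then, writing $c=(c_1,\dots,c_R)$ and using $\|c_r\|=\|T_rc_r\|$, one gets $\sigma_{\min}(T)^2=\min\{\,\|\sum_r\dot{\mathpzc{A}}_r\|^2/\sum_r\|\dot{\mathpzc{A}}_r\|^2 : \dot{\mathpzc{A}}_r\in T_{\mathpzc{A}_r}\mathcal{M}_r^{n_1,\dots,n_D}\text{ not all }0\,\}$, and likewise for $\mathpzc{G}$. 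So the theorem asserts that these two minima coincide. I would fix HOSVD representations $\mathpzc{G}_r=(U_1^r,\dots,U_D^r)\cdot\mathpzc{C}_r$, so that $\mathpzc{A}_r=(Q_1U_1^r,\dots,Q_DU_D^r)\cdot\mathpzc{C}_r$, and set $Q:=Q_1\otimes\dots\otimes Q_D$, which is a linear isometry since every $Q_d^TQ_d=\mathds{1}$. Because $Q$ carries $\mathcal{M}_r^{m_1,\dots,m_D}$ into $\mathcal{M}_r^{n_1,\dots,n_D}$ with $\mathpzc{G}_r\mapsto\mathpzc{A}_r$ and $Q$ is linear, $Q(T_{\mathpzc{G}_r}\mathcal{M}_r^{m_1,\dots,m_D})\subseteq T_{\mathpzc{A}_r}\mathcal{M}_r^{n_1,\dots,n_D}$; applying $Q$ to a core tangent configuration preserves all norms, which already gives $\kappa^{\mathrm{SBTD}}(\mathpzc{A}_1,\dots,\mathpzc{A}_R)\ge\kappa^{\mathrm{SBTD}}(\mathpzc{G}_1,\dots,\mathpzc{G}_R)$ — the ``expected'' inequality.

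For the reverse inequality I would analyse $T_{\mathpzc{A}_r}\mathcal{M}_r^{n_1,\dots,n_D}$ through the orthogonal decomposition $\R^{n_1\times\dots\times n_D}=\bigoplus_{S\subseteq\{1,\dots,D\}}\mathcal{H}_S$ induced by $\R^{n_d}=\operatorname{range}(Q_d)\oplus\operatorname{range}(Q_d)^\perp$. Starting from \cref{eq:TspaceGTucker} at $\mathpzc{A}_r$ and writing each admissible $\dot{W}_d=Q_d\dot{V}_d+Q_d^\perp\dot{Z}_d$ — under which the constraint $(Q_dU_d^r)^\dagger\dot{W}_d=0$ becomes $(U_d^r)^\dagger\dot{V}_d=0$ with $\dot{Z}_d$ unconstrained — a general tangent vector at $\mathpzc{A}_r$ becomes $\dot{\mathpzc{A}}_r=Q(\dot{\mathpzc{G}}_r)+\sum_{d=1}^D\dot{\mathpzc{E}}_r^{(d)}$, where $\dot{\mathpzc{G}}_r\in T_{\mathpzc{G}_r}\mathcal{M}_r^{m_1,\dots,m_D}$ is the $\mathcal{H}_\emptyset$‑component and $\dot{\mathpzc{E}}_r^{(d)}=(Q_d^\perp\dot{Z}_d^r\otimes_d\bigotimes_{d'\ne d}Q_{d'}U_{d'}^r)\mathpzc{C}_r\in\mathcal{H}_{\{d\}}$; a dimension count shows this parametrisation is bijective. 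Since the $\mathcal{H}_S$ are mutually orthogonal and $Q$ is an isometry, the numerator and denominator of the ratio for any configuration split as $\|\sum_r\dot{\mathpzc{G}}_r\|^2+\sum_d\|\sum_r\dot{\mathpzc{E}}_r^{(d)}\|^2$ and $\sum_r\|\dot{\mathpzc{G}}_r\|^2+\sum_d\sum_r\|\dot{\mathpzc{E}}_r^{(d)}\|^2$. The mediant inequality then reduces the reverse direction to: (i) the $\mathcal{H}_\emptyset$‑part is a core configuration, whose ratio is $\ge\kappa^{\mathrm{SBTD}}(\mathpzc{G}_1,\dots,\mathpzc{G}_R)^{-2}$ by the formula above; and (ii) for each mode $d$ and every choice of $\dot{Z}_d^r$, $\|\sum_r\dot{\mathpzc{E}}_r^{(d)}\|^2\ge\kappa^{\mathrm{SBTD}}(\mathpzc{G}_1,\dots,\mathpzc{G}_R)^{-2}\sum_r\|\dot{\mathpzc{E}}_r^{(d)}\|^2$.

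Claim (ii) is the heart of the argument. Using mixed‑product identities and that $Q_d^\perp$ and the $Q_{d'}$ have orthonormal columns, $\|\dot{\mathpzc{E}}_r^{(d)}\|$ and $\|\sum_r\dot{\mathpzc{E}}_r^{(d)}\|$ equal the Frobenius norms of $N_r:=\dot{Z}_d^r(\mathpzc{C}_r)_{(d)}(\bigotimes_{d'\ne d}U_{d'}^r)^T$ and of $\sum_rN_r$, where as $\dot{Z}_d^r$ varies $N_r$ runs over all matrices whose rows lie in $\mathcal{W}_r:=\operatorname{rowspace}((\mathpzc{G}_r)_{(d)})\subseteq\R^{\prod_{d'\ne d}m_{d'}}$. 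Splitting over rows, (ii) reduces to the inequality $\|\sum_rp_r\|^2\ge\kappa^{\mathrm{SBTD}}(\mathpzc{G}_1,\dots,\mathpzc{G}_R)^{-2}\sum_r\|p_r\|^2$ for arbitrary $p_r\in\mathcal{W}_r$. The key point is that the fixed linear isometry $\iota$ sending $p$ to the tensor whose $d$‑th unfolding is $\vb{e}_1^{(m_d)}p^T$ maps $\mathcal{W}_r$ into $T_{\mathpzc{G}_r}\mathcal{M}_r^{m_1,\dots,m_D}$ for every $r$ simultaneously. This is exactly where $\mathrm{GL}$‑homogeneity of $\mathcal{M}_r$ (\cref{def_Tuckercore}, item 2) enters: for $p\in\mathcal{W}_r$ one writes $p=(\bigotimes_{d'\ne d}U_{d'}^r)\hat{p}$ with $\hat{p}^T$ in the row space of $(\mathpzc{C}_r)_{(d)}$, splits $\vb{e}_1^{(m_d)}=U_d^r\hat{u}+U_d^{r\perp}\hat{u}^\perp$, realises the $U_d^r\hat{u}$‑part of $\iota(p)$ by a core tangent direction coming from a $\mathrm{GL}(l_d^r)$‑curve through $\mathpzc{C}_r$ acting in mode $d$, and recognises the $U_d^{r\perp}\hat{u}^\perp$‑part as lying in the ``$\dot{U}_d$'' summand of \cref{eq:TspaceGTucker}. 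Applying $\iota$ termwise to $(p_1,\dots,p_R)$ produces, by linearity and isometry, a core tangent configuration with the same per‑term and summed norms, so its ratio — which by the formula above is $\ge\kappa^{\mathrm{SBTD}}(\mathpzc{G}_1,\dots,\mathpzc{G}_R)^{-2}$ — equals $\|\sum_rp_r\|^2/\sum_r\|p_r\|^2$; this proves the inequality and hence (ii). Combining (i), (ii) and the mediant inequality gives $\kappa^{\mathrm{SBTD}}(\mathpzc{A}_1,\dots,\mathpzc{A}_R)\le\kappa^{\mathrm{SBTD}}(\mathpzc{G}_1,\dots,\mathpzc{G}_R)$, and with the easy inequality the theorem follows.

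The main obstacle I anticipate is step (ii): isolating the correct ``extra'' subspaces and, above all, recognising that it is the $\mathrm{GL}$‑homogeneity of the core structures — not merely the fact that they are manifolds — which guarantees that perturbing a factor $Q_dU_d^r$ out of the range of $Q_d$ cannot create a configuration worse conditioned than one already available for the core. A secondary, purely bookkeeping difficulty is handling the repeated unfoldings and the cancellations of orthonormal‑column factors without slips.
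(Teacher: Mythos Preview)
Your argument is correct and rests on the same geometric decomposition as the paper's: split each $T_{\mathpzc{A}_r}\mathcal{M}_r^{n_1,\dots,n_D}$ orthogonally into the isometric image under $Q=Q_1\otimes\dots\otimes Q_D$ of $T_{\mathpzc{G}_r}\mathcal{M}_r^{m_1,\dots,m_D}$ plus $D$ ``out-of-subspace'' pieces, one per mode, and then show the extra pieces cannot lower the smallest singular value. The technical execution differs, however. The paper works with explicit orthonormal bases (\cref{prop:GTuckerOrthBasis}), assembles the Terracini matrices concretely as in \cref{eq:TerraciniAsplit}, proves a Kronecker identity (\cref{lemma:KroneckerSvdIdentity}) showing that each extra block $T_d^\perp$ has the same singular values as a column submatrix $\tilde{T}_d^{\mathrm{part}}$ of the core Terracini matrix, and concludes by singular value interlacing. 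You instead pass to the coordinate-free Rayleigh quotient $\|\sum_r\dot{\mathpzc{A}}_r\|^2/\sum_r\|\dot{\mathpzc{A}}_r\|^2$, use the mediant inequality to separate the $D+1$ pieces, and handle each mode-$d$ piece by building a single linear isometry $\iota$ into $\R^{m_1\times\dots\times m_D}$ that lands in $T_{\mathpzc{G}_r}\mathcal{M}_r^{m_1,\dots,m_D}$ for every $r$ simultaneously; your identification of $\mathrm{GL}$-homogeneity as the essential ingredient here matches exactly the paper's appeal to the tangents $\mathscr{V}_r^d$. Your route is more conceptual and sidesteps both the explicit basis construction and \cref{lemma:KroneckerSvdIdentity}; the paper's matrix computation, in exchange, yields the slightly stronger conclusion that \emph{both} extreme singular values of the two Terracini matrices coincide, not only the minimum.
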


Before presenting the proof in \cref{sec:invprops}, let us investigate some consequences of \cref{thm:SBTDcondInvariance}. In the subspace-constrained SBTD there are two levels of multilinear multiplication. The first level is in \cref{def:GTucker} of the structured Tucker decompositions. This level is always written with matrices $U_1^r,\dots,U_D^r$ that depend on the index of the summand. The second level is the multilinear multiplication defining the subspace constraint on the tensor $\mathpzc{A}$. This level is denoted with matrices $Q_1,\dots,Q_D$ and it is the same for all summands. This is summarised in the following diagram:
\[
\begin{tikzcd}
\mathpzc{A}_r\in \mathcal{M}_r^{n_1,\dots,n_D}
         & &&\mathpzc{G}_r\in \mathcal{M}_r^{m_1,\dots,m_D}
         \arrow[lll, "(Q_1 \otimes \dots \otimes Q_D)"]
         & &&\mathpzc{C}_r\in {\mathcal{M}}_r
         \arrow[lll, "(U_1^r \otimes \dots \otimes U_D^r)"].
\end{tikzcd}
\]
It is imperative to note, however, that $(Q_1\otimes \dots \otimes Q_D)\mathcal{M}_r^{m_1,\dots,m_D} \subsetneq \mathcal{M}_r^{n_1,\dots,n_D}$.
% ; see the discussion in \cref{rem_SBTD}.

When evaluating the sensitivity of a subspace-constrained SBTD \cref{eq:compressedSBTD} via the condition number \cref{eq:defcond}, there are at least four natural sets of perturbations $\mathcal{I}$ to consider. Let $\widetilde{\mathpzc{A}}$ denote the perturbed tensor. It could have resulted from one of the following increasingly restrictive perturbations of the subspace-constrained SBTD $\mathpzc{A}$:
\begin{enumerate}
\item $\mathpzc{A}$ was perturbed with no constraints and $\widetilde{\mathpzc{A}}$ was approximated by the closest SBTD $\widetilde{\mathpzc{A}}\approx \widetilde{\mathpzc{A}}_1+\dots+\widetilde{\mathpzc{A}}_R$ with $\widetilde{\mathpzc{A}}_r \in \mathcal{M}_r^{n_1,\dots,n_D}$;
\item $\mathpzc{A}$ was perturbed so $\widetilde{\mathpzc{A}}$ has an SBTD $\widetilde{\mathpzc{A}} = \widetilde{\mathpzc{A}}_1 + \dots + \widetilde{\mathpzc{A}}_R$ with $\widetilde{\mathpzc{A}}_r \in \mathcal{M}_r^{n_1,\dots,n_D}$;
\item $\mathpzc{A}$ was perturbed so $\mathpzc{A}'$ has a subspace-constrained SBTD $\widetilde{\mathpzc{A}} = (\widetilde{Q}_1, \dots, \widetilde{Q}_D)\cdot\widetilde{\mathpzc{G}}$ with core $\widetilde{\mathpzc{G}}=\widetilde{\mathpzc{G}}_1+\dots+\widetilde{\mathpzc{G}}_R$ and $\widetilde{\mathpzc{G}}_r\in\mathcal{M}_r^{m_1,\dots,m_D}$; or
\item $\mathpzc{A}$ was perturbed inside the fixed subspace $Q_1 \otimes \dots \otimes Q_D$ so $\widetilde{\mathpzc{A}}$ has a subspace-constrained SBTD $\widetilde{\mathpzc{A}} = (Q_1, \dots, Q_D)\cdot\widetilde{\mathpzc{G}}$ with core $\widetilde{\mathpzc{G}}=\widetilde{\mathpzc{G}}_1+\dots+\widetilde{\mathpzc{G}}_R$ and terms in the decomposition $\widetilde{\mathpzc{G}}_r\in\mathcal{M}_r^{m_1,\dots,m_D}$.
\end{enumerate}
Since there are $4$ domains of perturbations we can consider in \cref{eq:defcond}, there are also $4$ associated, a priori distinct, condition numbers. Let us denote the condition number corresponding to the $i$th type of perturbation by $\kappa_i$. Then we have
\begin{align}\label{eq:chainCondInequalities}
 \kappa_1 \ge \kappa_2 = \kappa^{\mathrm{SBTD}}(\mathpzc{A}_1,\dots,\mathpzc{A}_R) \ge \kappa_3 \ge \kappa_4 = \kappa^{\mathrm{SBTD}}(\mathpzc{G}_1,\dots,\mathpzc{G}_R).
\end{align}
% Our main interest concerns type $2$ and $4$ perturbations. If those are the same, it means that the condition number of a subspace-constrained SBTD can be computed from the Tucker core $\mathpzc{G}$ after an orthogonal Tucker compression of $\mathpzc{A}$.
However, we already proved in \cite[Corollary 5.5]{Breiding2019} that $\kappa_1 = \kappa_2$, i.e., arbitrary perturbations in combination with a least-squares approximation are no worse than structured perturbations. Combining this with \cref{thm:SBTDcondInvariance} immediately implies the following more formal restatement of \cref{thm:informalSBTDcondInvariance}.
\begin{corollary}
    \label{cor:SBTDcondInvariance}
    Suppose that we have SBTDs $\mathpzc{G} = \mathpzc{G}_1 + \dots + \mathpzc{G}_R \in \mathbb{R}^{m_1 \times \dots \times m_D}$ and $\mathpzc{A} = \mathpzc{A}_1 + \dots + \mathpzc{A}_R \in \mathbb{R}^{n_1 \times \dots \times n_D}$ related by $\mathpzc{A}_r = \left(Q_1 \otimes \dots \otimes Q_D \right) \mathpzc{G}_r$ for each $r=1,\dots, R$.
    If all $Q_d$ have orthonormal columns, then \cref{eq:chainCondInequalities} is an equality.
\end{corollary}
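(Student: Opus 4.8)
The plan is to read the corollary as a one-line \emph{squeeze} applied to the chain \cref{eq:chainCondInequalities}, which has already been assembled immediately above the statement, combined with two external equalities. No new geometry is needed: the hypotheses of the corollary — the summand-wise relation $\mathpzc{A}_r = (Q_1 \otimes \dots \otimes Q_D)\mathpzc{G}_r$ and the orthonormality of the columns of each $Q_d$ — are exactly those under which both \cref{thm:SBTDcondInvariance} and the inequalities in \cref{eq:chainCondInequalities} were derived, so everything reduces to bookkeeping on four numbers.

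Concretely, I would proceed in three short steps. First, I would record the two endpoint equalities already built into \cref{eq:chainCondInequalities}, namely $\kappa_2 = \kappa^{\mathrm{SBTD}}(\mathpzc{A}_1,\dots,\mathpzc{A}_R)$ and $\kappa_4 = \kappa^{\mathrm{SBTD}}(\mathpzc{G}_1,\dots,\mathpzc{G}_R)$; the latter uses that an orthonormal $Q_1 \otimes \dots \otimes Q_D$ preserves norms, so that perturbing $\mathpzc{A}$ inside this fixed subspace is isometric to perturbing the core $\mathpzc{G}$. Second, I would invoke the two equalities that collapse the chain: $\kappa_1 = \kappa_2$ from \cite[Corollary 5.5]{Breiding2019}, and $\kappa_2 = \kappa_4$, which is precisely \cref{thm:SBTDcondInvariance} applied to the present data. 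Third, the surviving middle inequalities $\kappa_2 \ge \kappa_3 \ge \kappa_4$ of \cref{eq:chainCondInequalities}, together with $\kappa_2 = \kappa_4$, pinch $\kappa_3$ between two equal values and force $\kappa_3 = \kappa_2 = \kappa_4$; combined with $\kappa_1 = \kappa_2$, the entire chain collapses to a single common value, which is exactly the assertion of the corollary.

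Because the deductive content is elementary, the only genuine care point — and what I would regard as the main obstacle — is confirming that the two cited results apply verbatim to the corollary's data. For \cref{thm:SBTDcondInvariance} I must check that its hypotheses are met: an \emph{orthogonal} Tucker decomposition $\mathpzc{A} = (Q_1,\dots,Q_D)\cdot\mathpzc{G}$ with orthonormal columns and the relation $\mathpzc{A}_r = (Q_1,\dots,Q_D)\cdot\mathpzc{G}_r$ for every $r$, both of which are furnished directly by the hypotheses. For $\kappa_1 = \kappa_2$ I must verify that \cite[Corollary 5.5]{Breiding2019} is stated at the level of generality of join decompositions on manifolds, so that it covers SBTDs and yields the equality of the least-squares and structured condition numbers without modification. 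Once these two hypothesis checks are in place, the squeeze is immediate and the proof is complete.
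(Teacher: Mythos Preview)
Your proposal is correct and follows exactly the paper's own reasoning: the corollary is stated as an immediate consequence of the chain \cref{eq:chainCondInequalities}, the equality $\kappa_1=\kappa_2$ from \cite[Corollary~5.5]{Breiding2019}, and \cref{thm:SBTDcondInvariance}, which together squeeze all four numbers to a common value. No additional argument is given in the paper beyond this, and your hypothesis checks are precisely the ones implicitly used there.
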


\subsection{Proof of the main result}
\label{sec:invprops}

\Cref{prop:GTuckerOrthBasis} allows us to prove our main result, \cref{thm:SBTDcondInvariance}. Before we do this, we need the following lemma.
\begin{lemma}
    \label{lemma:KroneckerSvdIdentity}
    For any set of matrices $A_k \in \mathbb{R}^{m \times n_k}$ and any set of orthogonal matrices $Q_k \in \mathbb{R}^{p \times p}$ where $k = 1, \dots, K$, the matrices
    \[
    X := \begin{bmatrix}A_1 &\dots& A_K \end{bmatrix}
    \quad\text{and}\quad
    Y := \begin{bmatrix}A_1 \otimes Q_1 &\dots& A_K \otimes Q_K \end{bmatrix}
    \]
    have the same singular values up to multiplicities.
\end{lemma}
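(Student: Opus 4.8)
The plan is to reduce everything to the Gram matrix $YY^T$, which is the natural object here because the orthogonal factors $Q_k$ enter it only through the products $Q_k Q_k^T = \mathds{1}_p$. First I would write $Y$ as a block row $Y = \begin{bmatrix} A_1 \otimes Q_1 & \dots & A_K \otimes Q_K \end{bmatrix}$, so that $YY^T = \sum_{k=1}^K (A_k \otimes Q_k)(A_k \otimes Q_k)^T$. Using $(A_k \otimes Q_k)^T = A_k^T \otimes Q_k^T$ together with the mixed-product property of the Kronecker product, each summand is $(A_k \otimes Q_k)(A_k^T \otimes Q_k^T) = (A_k A_k^T) \otimes (Q_k Q_k^T) = (A_k A_k^T) \otimes \mathds{1}_p$, where the last step uses orthogonality of $Q_k$. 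Summing over $k$ and applying the same block-row decomposition to $X$ gives $YY^T = \bigl( \sum_{k=1}^K A_k A_k^T \bigr) \otimes \mathds{1}_p = (XX^T) \otimes \mathds{1}_p$.

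Second, I would invoke the elementary spectral description of a Kronecker product with the identity: the eigenvalues of $(XX^T) \otimes \mathds{1}_p$ are exactly the eigenvalues of $XX^T$, each repeated $p$ times. Since the squared singular values of a matrix are precisely the eigenvalues of its Gram matrix, it follows that the multiset of squared singular values of $Y$ equals the multiset of squared singular values of $X$ with every multiplicity multiplied by $p$; the additional zero singular values forced by the larger format of $Y$ are absorbed by the qualifier ``up to multiplicities''. Taking square roots yields the claim.

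I do not expect a genuine obstacle here; the computation is routine once one commits to the right Gram matrix. The single point that needs care is to work with $YY^T$ rather than $Y^T Y$: in $Y^T Y$ the off-diagonal blocks are $(A_j^T A_k) \otimes (Q_j^T Q_k)$, and for $j \ne k$ the factor $Q_j^T Q_k$ need not be the identity, so the $Q_k$ do not decouple and the argument collapses. Selecting the $mp \times mp$ Gram matrix avoids this entirely and makes the proof a two-line calculation.
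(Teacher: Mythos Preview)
Your argument is correct. Computing $YY^T$ block-by-block and using $Q_kQ_k^T=\mathds{1}_p$ collapses everything to $(XX^T)\otimes\mathds{1}_p$, from which the conclusion follows immediately via the eigenvalue description of a Kronecker product with the identity. Your remark that one must use $YY^T$ rather than $Y^TY$ is exactly the right observation.

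The paper proves the lemma by a genuinely different (though equally short) route: it factorises $Y=[A_1\otimes\mathds{1}_p\ \dots\ A_K\otimes\mathds{1}_p]\,D$ with $D=\mathrm{diag}(\mathds{1}_{n_1}\otimes Q_1,\dots,\mathds{1}_{n_K}\otimes Q_K)$ block-diagonal orthogonal, so the singular values of $Y$ coincide with those of $[A_k\otimes\mathds{1}_p]_k$, which up to a permutation is $X\otimes\mathds{1}_p$; the Kronecker singular value identity then finishes. Your Gram-matrix computation is more self-contained and avoids the bookkeeping with permutations and the auxiliary block-diagonal matrix; the paper's factorisation is more structural and makes explicit \emph{why} the $Q_k$ can be stripped off (they are absorbed into a right-orthogonal factor), which is the viewpoint actually reused later in the proof of the main theorem when orthogonal prefactors such as $Q_d^\perp\otimes_d\bigotimes_{d'\ne d}Q_{d'}$ are discarded. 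Either proof is perfectly adequate for the lemma itself.
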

\begin{proof}
Define the block diagonal matrix $
D := \text{diag}(\mathds{1}_{n_1} \otimes Q_1, \dots, \mathds{1}_{n_K} \otimes Q_K)
$. Then
$
Y
=
\left[A_1 \otimes \mathds{1}_p \,\,\dots\,\, A_K \otimes \mathds{1}_p \right] D.
$
Since $D$ is orthogonal, $Y$ has the same singular values as $[A_1 \otimes \mathds{1}_p \dots A_K \otimes \mathds{1}_p]$. Up to a permutation of rows and columns, this is $X \otimes \mathds{1}_p$. The proof is completed by applying the singular value property of Kronecker products \cite[section 12.3.1]{Golub2013}.
\end{proof}
\begin{remark}
If each $A_k$ in the above lemma is itself a Kronecker product of at least $d$ matrices and we replace $Y$ by $\begin{bmatrix} Q_1 \otimes_d A_1 & \dots & Q_K \otimes_d A_K \end{bmatrix}$, the statement still holds, because changing the order of the factors only changes their Kronecker product by a permutation of rows and columns \cite[Equation 12.3.1]{Golub2013}.
\end{remark}

Now we can prove that the condition number of the SBTD is invariant under Tucker compression.

\begin{proof}[Proof of \cref{thm:SBTDcondInvariance}]
For the SBTDs $\mathpzc{A} = \mathpzc{A}_1+\dots+\mathpzc{A}_R$ and $\mathpzc{G} = \mathpzc{G}_1+\dots+\mathpzc{G}_R$ we denote their associated Terracini matrices by $T_{\mathpzc{A}_1,\dots,\mathpzc{A}_R}$ and $T_{\mathpzc{G}_1,\dots,\mathpzc{G}_R}$, respectively (see \cref{eq:Terracini}).
Our strategy is assembling the Terracini matrices in an appropriate way and so that we can compare their singular values.

For each $r=1,\dots,R$, we apply \cref{prop:GTuckerOrthBasis} to obtain a basis  $\mathscr{B}_{\mathpzc{G}_r}$ for $T_{\mathpzc{G}_r}\mathcal{M}_{r}^{m_1,\dots,m_D}$. That is, we write $\mathpzc{G}_r$ in HOSVD form $(U_1^r,\dots,U_D^r) \cdot \mathpzc{C}_r$ and compute matrices $U_d^{r\perp}$
so that $[U_d^r \quad U_d^{r \perp}]$ is orthogonal for each $d$. The vectors $\hat{\vb{u}}_{rj}^d$ are scaled versions of $\vb{e}_j^{(l_d^r)}$ as in \cref{prop:GTuckerOrthBasis}. That is, they are defined such that the transpose of $j$th right singular vector of the $d$th flattening of the $r$th core $(\mathpzc{C}_r)_{(d)}$ is $(\hat{\vb{u}}_j^d)^T (\mathpzc{C}_r)_{(d)}$.
This gives the basis
\[\mathscr{B}_{\mathpzc{G}_r} :=
\left\{
    (U_1^r, \dots, U_D^r) \cdot \dot{\mathpzc{C}_r}
\right\}
\cup
\left\{
    (U_1^r, \dots, U_{d-1}^r, U_d^{r\perp} \vb{e}_i (\hat{\vb{u}}_{rj}^d)^T, U_{d+1}^r, \dots, U_D^r) \cdot \mathpzc{C}_r
\right\}
\]
with $\dot{\mathpzc{C}} \in \mathscr{B}_{\mathpzc{C}}$, $d=1,\dots,D$, $i = 1,\dots,m_d - l_d$ and $j = 1,\dots,l_d^r$.

For $T_{\mathpzc{A}_r}\mathcal{M}_{r}^{n_1,\dots,n_D}$, we can use a basis of the same form, constructed as follows:
We form $Q_d^\perp$ so that the columns of $[Q_d \quad Q_d^\perp]$ are an orthonormal basis  of $\mathbb{R}^{n_d}$. Then define
\[
(Q_d U_d^r)^\perp := \left[ Q_d U_d^{r\perp} \quad Q_d^\perp \right]
\in \mathbb{R}^{n_d \times (n_d - l_d)}
.\]
The columns of this matrix are a basis for the orthogonal complement of the column space of $Q_d U_d^r$. A basis $\mathscr{B}_{\mathpzc{A}_r}$ of $T_{\mathpzc{A}_r} \mathcal{M}_{r}^{n_1,\dots,n_D}$ is obtained by applying \cref{eq:GTuckerOrthBasis} where $Q_d U_d^r$ fulfills the role of $U_d^r$ and $(Q_d U_d^r)^\perp$ fulfills that of $U_d^\perp$.

By rearranging the order of the basis vectors and factoring out all $Q_d$ and $Q_d^\perp$, we get a partition of this basis:
\begin{equation}\label{B_A_r}
\mathscr{B}_{\mathpzc{A}_r} =
(Q_1 \otimes \dots \otimes Q_D) (\mathscr{B}_{\mathpzc{G}_r})
\cup \mathscr{B}_{1}^{r,\perp} \cup \dots \cup \mathscr{B}_{D}^{r,\perp}
,
\end{equation}
where
\[
\mathscr{B}_{d}^{r,\perp} =
\left\{
\left(Q_d^\perp \otimes_d \bigotimes_{d' \ne d} Q_{d'} \right)
    \left(
    (\vb{e}_i^{(n_d - m_d)} (\hat{\vb{u}}_{rj}^d)^T) \otimes_d
    \bigotimes_{d' \ne d} U_d^r
    \right) \mathpzc{C}_r
\right\}_{i,j=1}^{n_d - m_d, l^r_d}
.\]
By construction of $Q_d^\perp$, the subspaces that for a fixed $r$ are spanned by the $D+1$ bases in \cref{B_A_r} are pairwise orthogonal. Therefore, collecting $\mathscr{B}_{\mathpzc{A}_r}$ for all $r$ gives a Terracini matrix of $\mathpzc{A}$, which splits into $D+1$ pairwise orthogonal blocks. Up to a permutation of the columns,
\begin{equation}
    \label{eq:TerraciniAsplit}
    T_{\mathpzc{A}_1,\dots,\mathpzc{A}_R} = \begin{bmatrix}
        (Q_1 \otimes \dots \otimes Q_D)T_{\mathpzc{G}_1,\dots,\mathpzc{G}_R}
        &
        T_1^\perp & \dots & T_D^\perp
    \end{bmatrix},
\end{equation}
where the columns of each $T_d^\perp$ are $\mathscr{B}_d^{1,\perp} \cup \dots \cup \mathscr{B}_d^{R,\perp}$. Explicitly,
\[
T_d^\perp =
\left(Q_d^\perp \otimes_d \bigotimes_{d' \ne d} Q_{d'} \right)
\left[
    \vb{e}_i^{(n_d - m_d)} \otimes_d
    \left( (\hat{\vb{u}}_{rj}^d)^T \otimes_d
    \bigotimes_{d' \ne d} U_d^r
    \right) \mathpzc{C}_r
\right]_{r,i,j=1}^{R, n_d - m_d, l^r_d}.
\]
Because the blocks are pairwise orthogonal, the singular values of $T_{\mathpzc{A}_1,\dots,\mathpzc{A}_R}$ are the union of those of $T_{\mathpzc{G}_1,\dots,\mathpzc{G}_R}$ and those of each $T_d^\perp$ separately.

The factor $\left(Q_d^\perp \otimes_d \bigotimes_{d' \ne d} Q_{d'} \right)$ is orthogonal and thus can be omitted for the purpose of computing singular values. By the definition of the Kronecker product, the columns of the remaining factor can be permuted to
\[
\tilde{T}_d^\perp
:=
\left[
    \mathds{1}_{n_d - m_d} \otimes_d
    \left(
    (\hat{\vb{u}}_{rj}^d)^T \otimes_d
    \bigotimes_{d' \ne d} U_d^r
    \right) \mathpzc{C}_r
\right]_{r,j=1}^{R, l^r_d}
\]
hence, its singular values are just those of $T_d^\perp$. We will show that this is effectively a submatrix of $T_{\mathpzc{G}_1,\dots,\mathpzc{G}_R}$ so that the desired result follows from the interlacing property of singular values \cite[Theorem 8.1.7]{Golub2013}.

For any $r$ and $d$, take all tangent vectors at $\mathpzc{G}_r$ in the set $\mathscr{V}_r^d \cup \mathscr{W}_r^d$ with
\begin{align*}
    \mathscr{V}_r^d &:=
    \left\{
    \left(
        \left( U_d^r \vb{e}_i^{(l_d^r)} (\hat{\vb{u}}_{rj}^d)^T \right) \otimes_d
        \bigotimes_{d' \ne d} U_d^r
    \right) \mathpzc{C}_r
    \right\}_{i,j=1}^{l^r_d, l_d^r} \quad\text{and}\\
    \mathscr{W}_r^d &:=
    \left\{
    \left(
        \left( U_d^{r \perp} \vb{e}_i^{(m_d - l_d^r)} (\hat{\vb{u}}_{rj}^d)^T \right) \otimes_d
        \bigotimes_{d' \ne d} U_d^r
    \right) \mathpzc{C}_r
    \right\}_{i,j=1}^{m_d - l^r_d, l_d^r}
.\end{align*}
In the proofs of \cref{prop:quotientMfd,prop:GTuckerIsMfd}, we showed that all vectors in the same form as $\mathscr{V}_r^d$ are tangent to $\mathcal{M}_{r}^{m_1,\dots,m_D}$.
$\mathscr{W}_r^d$ is just a subset of $\mathscr{B}_{\mathpzc{G}_r}$, the basis we used for~$\mathpzc{G}_r$.
By construction of $U_d^{r\perp}$, the spaces $\mathscr{V}_r^d$ and $\mathscr{W}_r^d$ are orthogonal. The inner products between the elements of $\mathscr{V}_r^d$ (respectively, $\mathscr{W}_r^d$) are of the same form as \cref{eq:orthBasisInnerProducts}. Hence, they are also zero.
By collecting $\mathscr{V}_r^d \cup \mathscr{W}_r^d$ for all $r$, we get a subset of the columns of~$T_{\mathpzc{G}_1,\dots,\mathpzc{G}_R}$:
\[
    \tilde{T}_d^{\mathrm{part}} := \left[
        [U_d^r \quad U_d^{r \perp}] \otimes_d
        \left(
        (\hat{\vb{u}}_{rj}^d)^T \otimes_d
        \bigotimes_{d' \ne d} U_d^r
        \right) \mathpzc{C}_r
    \right]_{r,j=1}^{R, l_r^d},
\]
which has the same singular values as $\tilde{T}_d^\perp$ by \cref{lemma:KroneckerSvdIdentity}. Hence, the singular values of $T_d^\perp$ are interlaced between those of $T_{\mathpzc{G}_1,\dots,\mathpzc{G}_R}$.
By reminding ourselves that \cref{eq:TerraciniAsplit} is a decomposition into pairwise orthogonal blocks, we can see that $T_{\mathpzc{A}_1,\dots,\mathpzc{A}_R}$ and $T_{\mathpzc{G}_1,\dots,\mathpzc{G}_R}$ must have the same extreme singular values.
\end{proof}

\section{Computing the condition number}\label{sec_fastcondition}
One computational advantage of \cref{thm:SBTDcondInvariance} is that for any SBTD that is computed using the compress-decompose-expand strategy from \cref{sec:basicDefs}, the condition number can be computed at a low extra cost right after the decompose phase. That is, it is not necessary to compute the expanded decomposition in order to know its condition number. Furthermore, if a subspace-constrained SBTD $\mathpzc{A}_1 + \dots + \mathpzc{A}_R$ is given, it can be compressed prior to computing its condition number. This gives \cref{alg:CondAfterCompression}.

\begin{algorithm}
    \caption{Computation of $\kappa^{\mathrm{SBTD}}(\mathpzc{A}_1,\dots,\mathpzc{A}_R)$ with $\mathpzc{A}_r = (U_1^r,\dots,U_D^r) \cdot \mathpzc{C}_r$.}
    \label{alg:CondAfterCompression}
\begin{algorithmic}
    \FOR{$d = 1,\dots,D$}{
        \STATE Compute a QR decomposition $Q_d R_d = [U_d^1,\dots,U_d^r]$.
    }\ENDFOR
    \FOR{$r = 1,\dots,R$}{
        \STATE $\mathpzc{G}_r \gets (Q_1^T,\dots,Q_D^T) \cdot \mathpzc{A}_r$
    }\ENDFOR
    \STATE Compute $\kappa^{\mathrm{SBTD}}(\mathpzc{G}_1,\dots,\mathpzc{G}_R)$ using the algorithm from \cref{sec:algorithm}.
\end{algorithmic}
\end{algorithm}

This algorithm was applied to the numerical example mentioned in the introduction. Its computational complexity compares to that of the naive approach as follows.

\begin{proposition}
    \label{prop:condComplexity}
    Let $\mathpzc{A} = \mathpzc{A}_1 + \dots + \mathpzc{A}_R \in \mathbb{R}^{n \times \dots \times n}$ be a subspace-constrained SBTD with core structures $\mathcal{M}_r$, where each $\mathcal{M}_r$ is an open submanifold of $\mathbb{R}^{l \times \dots \times l}$. Assume that the summands $\mathpzc{A}_r$ are given in HOSVD form. Assume that computing the QR and singular value decomposition of an $m \times n$-matrix with $m \ge n$ both take $O(mn^2)$ arithmetic operations.
    The number of arithmetic operations involved in applying \cref{eq:computeJoinCond} directly and applying \cref{alg:CondAfterCompression} is
    \[
    O(n^D R^2 l^{2D} + n^{D}R^2 D^2 l^2 (n-l)^2)
    \quad\text{and}\quad
    O( DnR^2l^2 + R^{D+2}l^{3D} + R^{D+4} l^{D+4} D^2),
    \]
    respectively.
\end{proposition}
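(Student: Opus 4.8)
The plan is to analyse both procedures by the same two-step recipe: bound the cost of assembling the relevant Terracini matrix from the orthonormal tangent bases of \cref{prop:GTuckerOrthBasis}, then bound the cost of reading off its smallest singular value, and check that the first is always dominated by the second. Throughout I would use that the QR and singular value decompositions of an $m\times n$ matrix cost $O(\max(m,n)\min(m,n)^2)\le O(mn^2)$, so the hypothesis also covers wide matrices via transposition, and no case split on whether $n^D$ exceeds the column count is needed.

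\textbf{Direct application of \cref{eq:computeJoinCond}.} Since each $\mathcal{M}_r$ is open in $\mathbb{R}^{l\times\dots\times l}$, we have $\dim T_{\mathpzc{C}_r}\mathcal{M}_r=l^D$ and the canonical basis may serve as $\mathscr{B}_{\mathpzc{C}_r}$; hence the basis $\mathscr{B}_{\mathpzc{A}_r}$ of \cref{eq:GTuckerOrthBasis} has $l^D+Dl(n-l)$ vectors, and the Terracini matrix $T_{\mathpzc{A}_1,\dots,\mathpzc{A}_R}$ has $n^D$ rows and $p:=R\bigl(l^D+Dl(n-l)\bigr)$ columns. I would first check that assembling it is cheap: the orthogonal completions $U_d^{r\perp}$ cost $O(DRn^3)$; the $\mathbb{T}_0$-blocks $U_1^r\otimes\dots\otimes U_D^r$ cost $O(Rn^Dl^D)$; and each of the $RDl(n-l)$ remaining tangents is materialised by mode-$d$ multiplications in $O(n^Dl)$ operations, with the factors $\bigotimes_{d'\ne d}U_{d'}^r$ shared within a block, giving $O(DRn^Dl^2(n-l))$ in total; a routine comparison shows each of these is dominated by $O(n^Dp^2)$. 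Finally $\sigma_{\min}$ of the $n^D\times p$ matrix is obtained from its SVD in $O(n^Dp^2)$ operations, and substituting $p^2\le 2\bigl(R^2l^{2D}+R^2D^2l^2(n-l)^2\bigr)$ gives the first claimed bound.

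\textbf{\cref{alg:CondAfterCompression}.} I would proceed line by line. The first loop is $D$ QR factorisations of $n\times Rl$ matrices at cost $O(D\,n\,(Rl)^2)=O(DnR^2l^2)$, producing $Q_d\in\mathbb{R}^{n\times m_d}$ with $l\le m_d\le Rl=:m$. In the second loop the key is to avoid multiplying $Q_d^T$ into the full tensor $\mathpzc{A}_r$: since $\mathpzc{A}_r=(U_1^r,\dots,U_D^r)\cdot\mathpzc{C}_r$ is in HOSVD form, $\mathpzc{G}_r=(Q_1^TU_1^r,\dots,Q_D^TU_D^r)\cdot\mathpzc{C}_r$, so the $DR$ products $Q_d^TU_d^r\in\mathbb{R}^{m_d\times l}$ cost $O(DR\,mnl)=O(DnR^2l^2)$ and the $R$ multilinear multiplications of the small cores cost $O(DR\,m^Dl)=O(DR^{D+1}l^{D+1})$, both inside the advertised budget because $DR^{D+1}l^{D+1}\le D^2R^{D+4}l^{D+4}$. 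The crucial observation is that the final line of \cref{alg:CondAfterCompression} is exactly the direct procedure above, applied now to the $D$-mode SBTD $\mathpzc{G}=\mathpzc{G}_1+\dots+\mathpzc{G}_R$ with mode sizes $m_d\le Rl$ and the \emph{same} core structures $\mathcal{M}_r\subseteq\mathbb{R}^{l\times\dots\times l}$. The $\mathpzc{G}_r$ are not yet in HOSVD form, but each has multilinear rank $(l,\dots,l)$, so its HOSVD is recovered from the Gram matrices $(\mathpzc{G}_r)_{(d)}(\mathpzc{G}_r)_{(d)}^T\in\mathbb{R}^{m_d\times m_d}$ and their eigendecompositions in $O(DR^{D+2}l^{D+1})$ operations, again dominated. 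Since the direct cost bound is increasing in the mode size, substituting $n\mapsto m=Rl$ (so $m^D=R^Dl^D$ and $(m-l)^2\le m^2=R^2l^2$) turns the first bound into $O(R^{D+2}l^{3D}+R^{D+4}l^{D+4}D^2)$; adding the three contributions and absorbing the dominated ones gives the second bound.

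\textbf{Main obstacle.} The conceptual content is light: once one sees that the compressed computation is literally the uncompressed algorithm with $n$ replaced by at most $Rl$, the second formula follows from the first by substitution. The real work is the surrounding bookkeeping — confirming that assembling each Terracini matrix, forming the completions $U_d^{r\perp}$, and recomputing the HOSVDs of the $\mathpzc{G}_r$ are all dominated by their governing singular value decompositions — together with the minor nuisance that the compressed mode sizes $m_d$ need not agree, which I would absorb via the uniform bound $m_d\le Rl$ and the monotonicity of the cost estimates in the mode size.
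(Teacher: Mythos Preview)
Your proposal is correct and follows essentially the same approach as the paper: count the columns of the Terracini matrix, argue that assembling it is dominated by the cost of its SVD, and for \cref{alg:CondAfterCompression} observe that the final step is the direct procedure with $n$ replaced by $m=Rl$. You are in fact slightly more careful than the paper in a couple of places (computing $\mathpzc{G}_r$ via the factored HOSVD form rather than the full tensor, and allowing the compressed mode sizes $m_d$ to differ), but these are refinements of the same argument rather than a different route.
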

\begin{proof}

    First, we apply \cref{eq:GTuckerOrthBasis} directly to $\mathpzc{A}_r = (U_1^r,\dots,U_D^r) \cdot \mathpzc{C}_r$.
    Computing the complement $U_d^{r\perp}$ of $U_d^r$ is negligible.
    The basis vectors in \cref{eq:GTuckerOrthBasis} with indices $i,j,d$ can be computed as tensors whose $d$th unfolding is
    $
    U_d^{r\perp} \vb{e}_i (U_d \vb{e}_j)^T (\mathpzc{A}_r)_{(d)}
    $, which takes $O(ln^D)$ operations per basis vector. Computing $U_1^r \otimes \dots \otimes U_D^r$ takes $O(n^D l^D)$ time.
    This gives a time of $O(Rn^Dl^D + Rl^2(n-l)n^D)$ to construct the full Terracini matrix, whose dimensions are $n^D \times p$ where $p = R(l^D + Dl(n-l))$. Computing its singular values requires $O(n^Dp^2) = O(n^D R^2 l^{2D} + n^{D}R^2 D^2 l^2 (n-l)^2)$ operations \cite{Golub2013}.

    Next, we consider \cref{alg:CondAfterCompression}. The matrices $[U_d^1,\dots,U_d^r]$ have  $m := Rl$ columns and $n$ rows, which gives a complexity of $O(nR^2l^2)$ for each each QR decomposition \cite{Golub2013}. Converting each $\mathpzc{G}_r$ to HOSVD form takes $O(Dm^{D+1})$ time \cite{Vannieuwenhoven2012}.
    Constructing the Terracini matrix is negligible compared to computing its singular values, as before. In this case, the Terracini matrix has dimensions $m^D \times q$ where $q = R(l^D + Dl(m-l)) = O(R l^D + R^2 Dl^2)$. The computation of the singular values requires $O(m^D q^2) = O(R^D l^D q^2) = O(R^{D+2}l^{3D} + R^{D+4} l^{D+4} D^2)$ operations.
\end{proof}

If $n$ is significantly larger than $Rl$ in this proposition, the complexity is approximated by $O(n^{D+2} R^2D^2l^2)$ and $O(Rln^D)$, respectively, which shows the superiority of \cref{alg:CondAfterCompression}. On the other hand, if $n \le Rl$, the algorithm does not compress the decomposition and merely adds overhead.

\section{Numerical experiments}
\label{sec:experiments}
\begin{figure}[p]
    \centering
    \includegraphics[width=0.83\textwidth]{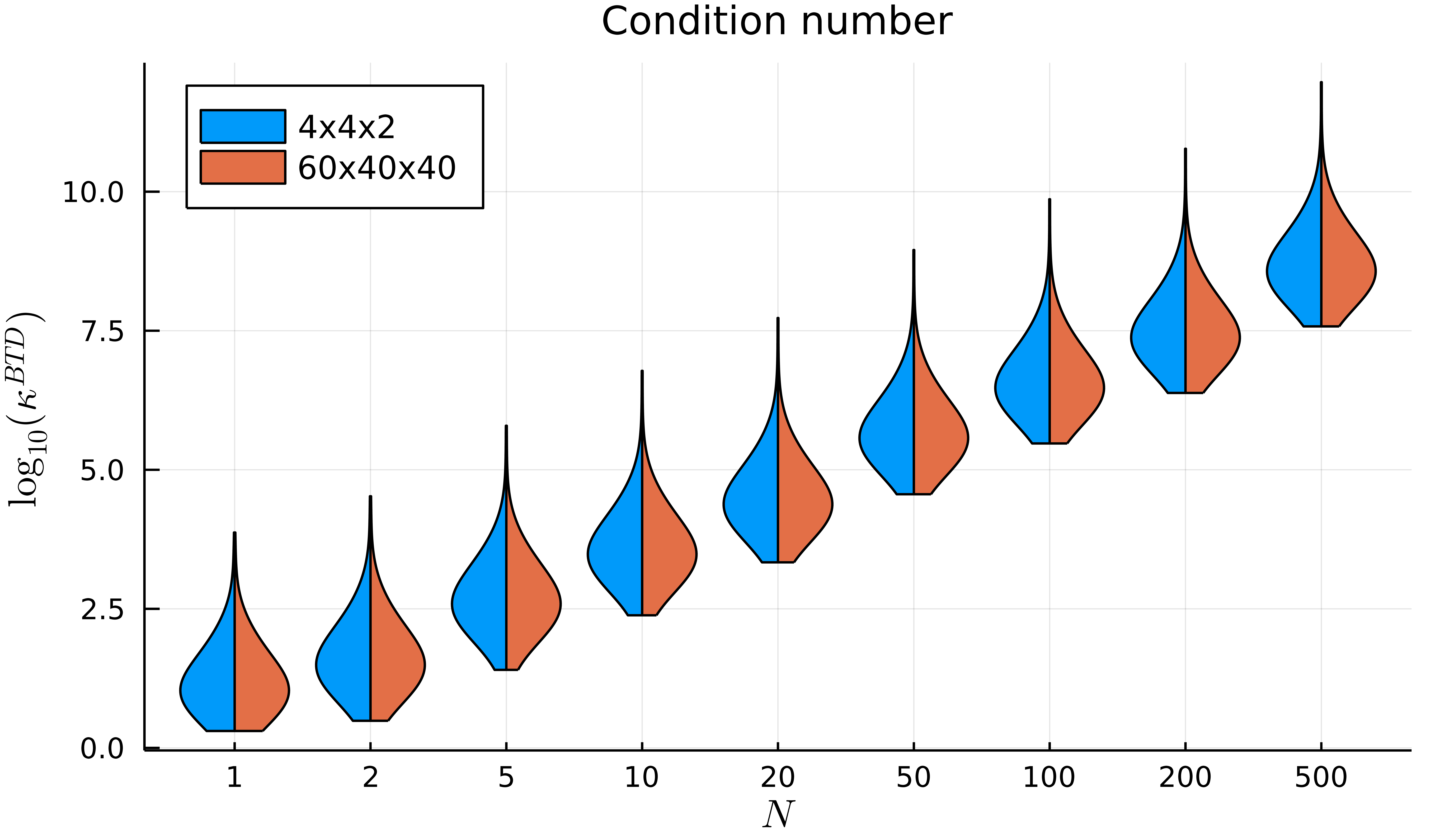}
    \caption{Condition number of the BTD of $\mathpzc{G}_N \in \mathbb{R}^{4 \times 4 \times 2}$ and that of $\mathpzc{A}_N \in \mathbb{R}^{60 \times 40 \times 40}$ from the experiments in \cref{sec:experiments}}
    \label{fig:violin_cond}
\end{figure}
\begin{figure}[p]
    \centering
    \includegraphics[width=0.83\textwidth]{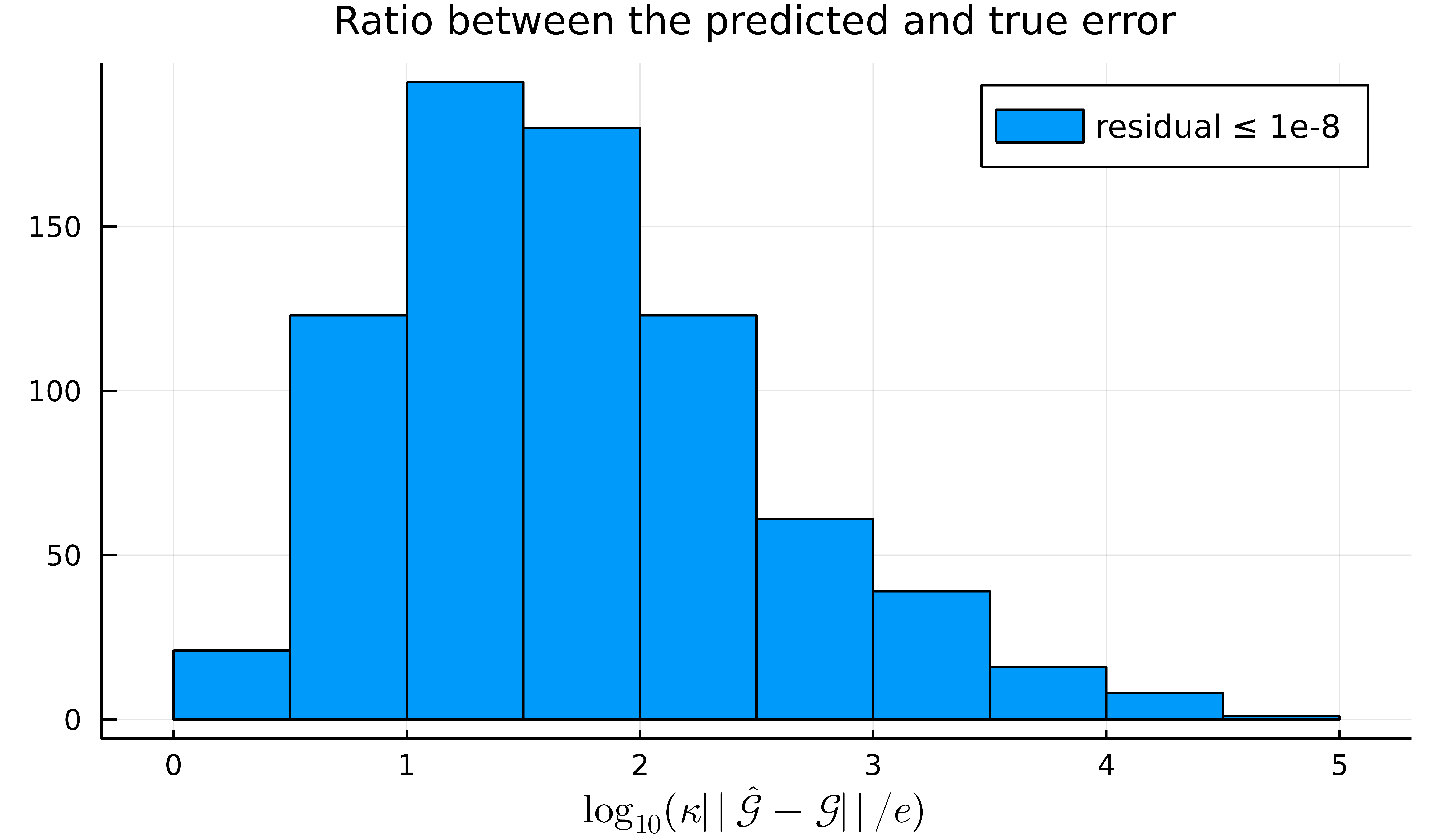}
    \caption{Ratio between the estimated forward error based on \cref{eq:fstOrderErrBound} and the true forward error for $\mathpzc{G}_N$ in the experiments in \cref{sec:experiments}. Only cases with a residual $\norm{\hat{\mathpzc{G}} - \mathpzc{G}} \le 10^{-8}$ were considered.}
    \label{fig:hist_forward_backward}
\end{figure}
\begin{figure}[p]
    \centering
    \includegraphics[width=0.83\textwidth]{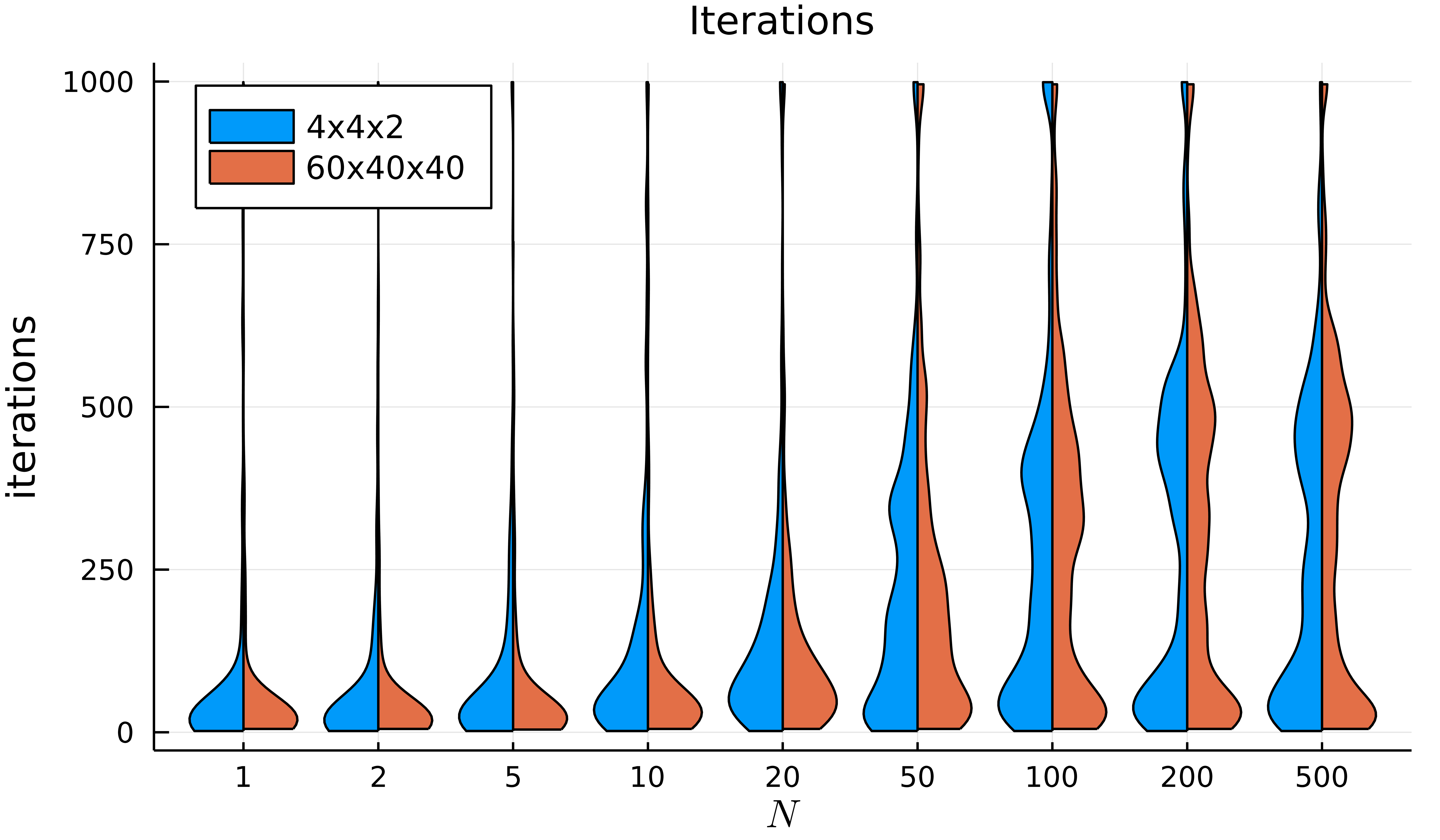}
    \caption{Number of iterations of \texttt{btd\_nls} applied to $\mathpzc{G}_N \in \mathbb{R}^{4 \times 4 \times 2}$ and $\mathpzc{A}_N \in \mathbb{R}^{60 \times 40 \times 40}$ from the experiments in \cref{sec:experiments}.}
    \label{fig:violin_iterations}
\end{figure}

We present a few numerical experiments illustrating the main result, \cref{thm:SBTDcondInvariance}, with a sequence of ill-conditioned block term decompositions.
All numerical computations were performed on an Intel Xeon CPU E5-2697 v3 running on 16 cores and 126GB memory. The tensor decompositions were computed in MATLAB R2018b with Tensorlab 3.0 \cite{Vervliet2017} and the other computations were performed in Julia v1.6.\ \cite{BEKV}.

% \subsection{Ill-conditioned BTDs}
De Silva and Lim \cite{DeSilva2008} give an explicit parametrisation of a general curve of rank-$2$ tensors $\mathpzc{X}_N$ that converges to a rank-$3$ tensor as $N \rightarrow \infty$. In such cases, the condition number diverges to infinity \cite{Breiding2018a}. Given the vectors $\vb{x}_d$ and $\vb{y}_d$ for $d=1,2,3$,
the sequence $\{\mathpzc{X}_N\}_{N=1}^\infty$ is given by
\[
N \bigotimes_{d=1}^3 \left(\vb{x}_d + \frac{1}{N} \vb{y}_d\right) - N \bigotimes_{d=1}^3 \vb{x}_d
=
\vb{y}_1 \otimes \vb{x}_2 \otimes \vb{x}_3
+
\vb{x}_1 \otimes \vb{y}_2 \otimes \vb{x}_3
+
\vb{x}_1 \otimes \vb{x}_2 \otimes \vb{y}_3
+
\mathcal{O}\left(\frac{1}{N}\right)
.\]
This example can easily be generalised to block term decompositions. Take any third-order core tensor $\mathpzc{C}$ of full multilinear rank and any two sets of full-rank matrices $\{A_d\}_{d=1}^3$ and $\{B_d\}_{d=1}^3$. Then set
\begin{equation}
    \label{eq:modelIllCondBTD}
\mathpzc{G}_N := \left(
    N \bigotimes_{d=1}^3 \left(B_d + \frac{1}{N} A_d\right) - N \bigotimes_{d=1}^3 B_d
\right) \mathpzc{C}
.\end{equation}
Both blocks have the same multilinear rank assuming $B_d$ and $B_d + \frac{1}{N}A_d$ have full rank. Similarly to $\mathpzc{X}_N$, we can see that $\mathpzc{G}_N$ equals a three-term BTD independent of~$N$, plus $o(N^{-1})$ terms.
Its condition number diverges as $N \rightarrow \infty$ by a special case of \cite[Theorem 1.4]{Breiding2018a}.

% \begin{proposition}
% For each $r = 1,\dots,R$, let $\mathcal{M}_r \subseteq \mathbb{R}^\Pi$ be Riemannian manifolds that are also cones. Let $\vb{p}_r(N)$ be any sequence in $\mathcal{M}_r$. If $\sum_{r=1}^R \vb{p}_r(N)$ converges as $N \rightarrow \infty$ while at least one $\norm{\vb{p}_r(N)} \rightarrow \infty$, the condition number of $(\vb{p}_r(N))_{r=1}^R$ as a join decomposition gets arbitrarily large.
% \end{proposition}

We generated tensors of this model where the all the core tensor $\mathpzc{C} \in \mathbb{R}^{2 \times 2 \times 1}$ and the matrices $A_1, A_2 \in \mathbb{R}^{4 \times 2}, A_3 \in \mathbb{R}^{2 \times 1}$ all have standard normally distributed entries and $B_d$ is the Q-factor of the QR decomposition of a matrix with standard normal entries. For several values of $N$, we generated 2000 tensors of model \cref{eq:modelIllCondBTD}. For each of these we generated an ``inflated'' version $\mathpzc{A}_N = (Q_1, Q_2, Q_3) \cdot \mathpzc{G}_N$ for some $Q_1,Q_2,Q_3$ with orthonormal columns. The dimensions of the tensors are $\mathpzc{G}_N \in \mathbb{R}^{4 \times 4 \times 2}$ and $\mathpzc{A}_N \in \mathbb{R}^{60 \times 40 \times 40}$.

We used Tensorlab's Gauss--Newton method \texttt{ll1\_nls} \cite{Vervliet2017} to compute a two-term $(2,2,1)$-BTD of both the (sequences of) tensors $\mathpzc{A}_N$ and $\mathpzc{G}_N$ independently. Since $\mathpzc{A}_N$ has a subspace-constrained BTD with core tensor $\mathpzc{G}_N$, by \cref{thm:SBTDcondInvariance} their condition numbers are the same. 
Some built-in optimisations were disabled, namely automatic Tucker compression and the use of the iterative solver to solve the linear system to compute the quasi--Newton update direction. This is to ensure the same algorithm is used for both tensors. Since \texttt{ll1\_nls} stops when the backward error reaches a certain threshold, this generates exact decompositions of nearby tensors, which allows us to compare the forward and backward error.

A violin plot of the condition number of both BTDs is shown in \cref{fig:violin_cond}. The condition number does indeed increase with the parameter $N$. Moreover, the distribution of the condition number of the BTD of $\mathpzc{G}_N$ is the same as that of~$\mathpzc{A}_N$.  We did find that the ratio between the \textit{computed} condition numbers $\hat{\kappa}$ deviated slightly from one in the more ill-conditioned cases.
The most extreme case was $\hat{\kappa}(\mathpzc{A}_1,\dots,\mathpzc{A}_R) \approx (1 - 2\cdot 10^{-5}) \hat{\kappa}(\mathpzc{G}_1,\dots,\mathpzc{G}_R)$ where $\kappa > 10^{12}$. We attribute this to numerical roundoff. These results thus provide a numerical verification of \cref{thm:SBTDcondInvariance}.

A major application of the condition number is to estimate the forward error.  For a true decomposition $ \mathpzc{G} = \sum_{r=1}^R \mathpzc{G}_r$ and a computed decomposition $\hat{\mathpzc{G}} = \sum_{r=1}^R \hat{\mathpzc{G}}_r$, the forward error is measured as
\[
e = \min_{\pi \in \mathscr{S}_R} \sqrt{ \sum_{r=1}^r \norm{\mathpzc{G}_r - \hat{\mathpzc{G}}_{\pi(r)}}^2},
\]
where $\mathscr{S}_R$ is the symmetric group of $R$ elements. By \cref{eq:fstOrderErrBound} we can estimate that $e \lesssim \kappa^{\mathrm{BTD}} \norm{\mathpzc{G} - \hat{\mathpzc{G}}}$
as long as the residual $\norm{\mathpzc{G} - \hat{\mathpzc{G}}}$ is not too large. \Cref{fig:hist_forward_backward} shows that this bound tends to hold when the residual is at most $10^{-8}$.

Finally, condition numbers tend to be related to convergence rate. Because the condition number is equal for $\mathpzc{G}_N$ and $\mathpzc{A}_N$, one could expect the convergence behaviour to be the same. This is reflected in the number of iterations, see \cref{fig:violin_iterations}. It also shows that convergence gets slower as the condition number increases.

The cost per iteration is expected to be a function of only the dimensions of the tensor and the block terms, as only direct linear algebra routines are used to compute the iteration steps \cite{Sorber2013a}. By using the compressed tensor $\mathpzc{G}_N$ instead of $\mathpzc{A}_N$, the geometric mean of the speedup per iteration was 9.5. In \cite{Bro1998}, speedup factors of up to 40 were observed for the ALS algorithm applied to tensors used in chemometrics. Note that this is the speedup of computing the decomposition. For the sugar data set of \cite{Bro1998}, the computation of the \textit{condition number}, as mentioned in the introduction, was sped up by a factor of $15\,000$ by first Tucker compressing the tensor from size $265 \times 371 \times 7$ to $3 \times 3 \times 3$.

% \printbibliography
\bibliographystyle{siam}
\bibliography{library}

\end{document}